\g@addto@macro\bfseries{\boldmath} 
\newcommand{\K}{\mathbb K}
\newcommand{\C}{\mathbb C}
\newcommand{\N}{\mathbb N}
\newcommand{\R}{\mathbb R}
\newcommand{\abs}[1]{\left\vert#1\right\vert}
\newcommand{\F}{\mathcal F}
\newcommand{\eps}{\varepsilon}
\newcommand{\Pol}{\mathcal P}
\DeclareMathOperator{\re}{Re}
\DeclareMathOperator{\OF}{OF}
\newcommand{\norm}[1]{\left\Vert#1\right\Vert}
\newcommand{\scal}[1]{\left\langle#1\right\rangle}
\newcommand{\Lip}{\mathop\mathrm{Lip}}
\DeclareMathOperator{\real}{Re}
\newcommand{\Id}{{\mathrm{Id}}}
\DeclareMathOperator{\dens}{dens}
\newcommand{\lipfree}[1]{\mathcal{F}({#1})} 
\theoremstyle{plain}
\newtheorem*{thm*}{Theorem}
\newtheorem*{prop*}{Proposition}
\newtheorem*{cor*}{Corollary}
\newtheorem{thm}{Theorem}[section]
\newtheorem{cor}[thm]{Corollary}
\newtheorem{lem}[thm]{Lemma}
\newtheorem{prop}[thm]{Proposition}
\theoremstyle{definition}
\newtheorem{defn}[thm]{Definition}
\newtheorem{rem}[thm]{Remark}
\begin{document}

\title[The polynomial Daugavet property]{The Daugavet property is equivalent to the polynomial Daugavet property}

\date{August 14th, 2025; Revised July 14th, 2026}
\keywords{Daugavet property; polynomial Daugavet property; Daugavet center; symmetric tensor product.}
\subjclass[2020]{46B04 (primary), and 46B20; 46B25; 46B28;
46G25 (secondary)}

\author[S.~Dantas]{Sheldon Dantas}
\address[S.~Dantas]{Czech Technical University in Prague, FEE, Department of Mathematics, Technick\'{a} 2, 16627, Prague 6, Czech Republic \newline
\href{https://orcid.org/0000-0001-8117-3760}{ORCID: \texttt{0000-0001-8117-3760}}}
\email{\texttt{sheldon.dantas@fel.cvut.cz}}
\urladdr{www.sheldondantas.com}

\author[Mart\'in]{Miguel Mart\'in}
\address[Mart\'in]{Department of Mathematical Analysis and Institute of Mathematics (IMAG), University of Granada, E-18071 Granada, Spain}
\email{mmartins@ugr.es}
\urladdr{\url{https://www.ugr.es/local/mmartins/}}
\urladdr{
\href{http://orcid.org/0000-0003-4502-798X}{ORCID: \texttt{0000-0003-4502-798X} } }

\author[Perreau]{Yo\"el Perreau}
\address[Perreau]{Institute of Mathematics and Statistics, University of Tartu, Narva mnt 18, 51009 Tartu, Estonia}
\email{yoel.perreau@ut.ee}
\urladdr{
\href{https://orcid.org/0000-0002-2609-5509}{ORCID: \texttt{0000-0002-2609-5509} } }

\begin{abstract} 
In this note, we prove that the Daugavet property implies the polynomial Daugavet property, solving a longstanding open problem in the field. Our approach is based on showing that a geometric characterization of the Daugavet property due to Shvidkoy, originally formulated in terms of the weak topology, remains valid when the weak topology is replaced by the weak polynomial topology. Using similar techniques, we further establish that every linear Daugavet center is also a polynomial Daugavet center, and that the weak operator Daugavet property implies its polynomial counterpart. As an application of the latter result, we present new examples of Banach spaces whose $N$-fold symmetric tensor products satisfy the Daugavet property.
\end{abstract}

\maketitle

\section{Introduction}\label{sec:introduction}
Given a Banach space $X$ over a field $\K$ ($\K=\R$ or $\K=\C$), we denote by $B_X$ its closed unit ball and by $S_X$ its closed unit sphere. We also denote by $X^*$ the topological dual of $X$ and by $X^{**}$ its bidual. For Banach spaces $X$ and $Y$, we write $\mathcal{L}(X,Y)$ to denote the Banach space of all bounded linear operators from $X$ to $Y$ endowed with the operator norm. The symbol $\Id$ stands for the identity operator. We will write $\re z$ to denote the real part of $z\in \K$; of course, when $\K=\R$, this simply means $\re z=z$.

A Banach space $X$ has the \emph{Daugavet property} (\emph{DPr}, for short), if the \emph{Daugavet equation}
\begin{equation}\tag{DE}\label{eq:DE}
\norm{\Id + T} = 1+\norm{T}
\end{equation}
holds for every rank-one $T\in\mathcal{L}(X,X)$. In this case, we have that every weakly compact operator $T\in \mathcal{L}(X,X)$ also satisfies \eqref{eq:DE} (in fact, many others classes of linear operators satisfy \eqref{eq:DE}, see the survey \cite{Werner01} for more on the topic). This property takes its name after I.~K.~Daugavet's paper from 1963, in which it is proved that compact linear operators on $C[0,1]$ satisfy \eqref{eq:DE}. 
Examples of Banach spaces with the DPr include $C(K)$-spaces when $K$ is perfect \cite{Daugavet63}, $L_1(\mu)$-spaces when $\mu$ has no atoms \cite{Lozanovskii66}, Lipschitz-free spaces $\lipfree{M}$ and Lipschitz spaces $\Lip(M)$ when $M$ is a length metric space \cite{IKW07,GLPRZ18}, non-atomic $C^*$-algebras and preduals of diffuse von Neumann algebras \cite{BM05,Oik02}, and some Banach algebras of holomorphic functions on an arbitrary Banach space \cite{Jung23,Werner97,Wojtaszcyk92}, among many others. It is immediate that the DPr passes from the dual to the space itself, while the other implication is false, as the above list of examples shows. We refer the reader to the recent monograph \cite{KMRZW25} for a detailed account and background on the Daugavet property. Considerable attention has been devoted to this isometric property of Banach spaces, which has several notable consequences on the isomorphic structure of the underlying Banach space. For instance, a Banach space with the Daugavet property cannot be isomorphically embedded into a Banach space with an unconditional Schauder basis (see \cite{KSSW00}), extending a classical result by A.~Pe{\l}czy\'{n}ski for $L_1[0,1]$. Other important isomorphic consequences of the DPr on a Banach space are that it cannot satisfy the Radon-Nikod\'{y}m property and that it contains (many) isomorphic copies of $\ell_1$ (hence, it cannot be Asplund). These latter consequences can be proved by using a celebrated geometric characterization of the DPr in terms of slices of the unit ball which was first obtained in \cite{KSSW00}. This result can be restated in the following way.

\begin{prop}[\mbox{\cite[Lemma 2.2]{KSSW00}}]\label{prop:DP_slice_characterization}
   A Banach space $X$ has the DPr if and only if its unit ball $B_X$ is equal to the closed convex hull of the set 
   \begin{equation*}
       \{y\in B_X\colon \norm{x+y}>2-\eps\}
   \end{equation*} 
   for every $x\in S_X$ and $\eps>0$. In other words, for every $x\in S_X$ and every slice $S$ of $B_X$, we have 
    \begin{equation*}
        \sup_{y\in S}\norm{x+y}=2
    \end{equation*}
\end{prop}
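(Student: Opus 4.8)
The plan is to prove the statement in two stages: first I would establish the equivalence between the Daugavet equation for rank-one operators and the slice condition $\sup_{y\in S}\norm{x+y}=2$, and then deduce the closed-convex-hull reformulation from a routine separation argument. For the setup, recall that every rank-one operator has the form $x^*\otimes u$ with $(x^*\otimes u)(z)=x^*(z)\,u$, where $x^*\in S_{X^*}$ and $u\in X$, and that $\norm{x^*\otimes u}=\norm{u}$; by homogeneity it suffices to take $u=x\in S_X$. A slice of $B_X$ is a set $S(x^*,\alpha)=\set{y\in B_X\colon \re x^*(y)>1-\alpha}$ with $x^*\in S_{X^*}$ and $\alpha>0$, and since $\norm{\Id+T}\le 1+\norm{T}$ always holds, the DPr reduces to the lower bound $\norm{\Id+x^*\otimes x}\ge 2$ for all $x\in S_X$ and $x^*\in S_{X^*}$.

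Assume first that $X$ has the DPr and fix $x\in S_X$ together with a slice $S(x^*,\alpha)$. Applying $\norm{\Id+x^*\otimes x}=2$, for each small $\delta>0$ I would choose $z\in B_X$ with $\norm{z+x^*(z)x}>2-\delta$; the triangle inequality forces both $\abs{x^*(z)}>1-\delta$ and $\norm{z}>1-\delta$. Writing $\mu=x^*(z)$ and setting $y=(\bar\mu/\abs\mu)z$ rotates $z$ so that $x^*(y)=\abs\mu>1-\delta$, hence $y\in S(x^*,\delta)\subseteq S(x^*,\alpha)$; multiplying the estimate for $z$ by the unimodular scalar $\bar\mu/\abs\mu$ gives $\norm{y+\abs\mu\,x}>2-\delta$, and discarding the factor $\abs{1-\abs\mu}<\delta$ yields $\norm{x+y}>2-2\delta$. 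Letting $\delta\to 0$ gives $\sup_{y\in S}\norm{x+y}=2$.

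For the converse, assume the slice condition and fix $x\in S_X$, $x^*\in S_{X^*}$, and $\eps>0$. The condition applied to $S(x^*,\eps)$ provides $y\in S(x^*,\eps)$ with $\norm{x+y}>2-\eps$; choosing a norming functional $f\in S_{X^*}$ with $\re f(x+y)>2-\eps$ forces $\re f(x)>1-\eps$ and $\re f(y)>1-\eps$. I would then evaluate $\re f\bigl((\Id+x^*\otimes x)y\bigr)=\re f(y)+\re\bigl(x^*(y)f(x)\bigr)$. Since $\re x^*(y)>1-\eps$ and $\abs{x^*(y)}\le 1$ give $\abs{1-x^*(y)}\le\sqrt{2\eps}$, and likewise $\abs{1-f(x)}\le\sqrt{2\eps}$, the product $x^*(y)f(x)$ is within $O(\sqrt\eps)$ of $1$, so $\norm{\Id+x^*\otimes x}\ge\norm{(\Id+x^*\otimes x)y}\ge\re f\bigl((\Id+x^*\otimes x)y\bigr)\ge 2-O(\sqrt\eps)$. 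As $\eps$ is arbitrary this yields the required lower bound, completing the first equivalence.

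Finally, the passage to the closed-convex-hull formulation rests on the standard fact, a consequence of the Hahn--Banach separation theorem, that a set $A\subseteq B_X$ satisfies $\cconv(A)=B_X$ if and only if $A$ meets every slice of $B_X$: if $A$ avoids some $S(x^*,\alpha)$ then $\sup_{a\in A}\re x^*(a)\le 1-\alpha<1=\sup_{B_X}\re x^*$, and separation produces a point of $B_X$ outside $\cconv(A)$, while the reverse implication is immediate. Taking $A=\set{y\in B_X\colon \norm{x+y}>2-\eps}$ and quantifying over $\eps>0$ translates ``$A$ meets every slice for all $\eps$'' into ``$\sup_{y\in S}\norm{x+y}=2$ for every slice $S$'', which closes the argument. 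I expect the converse direction to be the main obstacle: the delicate point is converting the purely geometric data — a vector $y$ lying deep in the slice while satisfying $\norm{x+y}\approx 2$ — into a genuine operator-norm lower bound, which hinges on exhibiting a single functional $f$ that is nearly norming for $x$, for $y$, and numerically compatible with $x^*$ at once, with the unimodular bookkeeping controlling $\abs{1-x^*(y)}$ and $\abs{1-f(x)}$ being where the complex case requires care.
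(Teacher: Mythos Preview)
The paper does not prove this proposition; it is quoted verbatim as \cite[Lemma~2.2]{KSSW00} and used as background. There is therefore no proof in the paper to compare against. Your argument is essentially the standard one from \cite{KSSW00}: the main equivalence between the Daugavet equation for rank-one operators and the slice condition is handled correctly, including the unimodular rotation in the complex case and the $\sqrt{\eps}$-control of $\abs{1-x^*(y)}$ and $\abs{1-f(x)}$.

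One point in the final paragraph deserves correction. In the equivalence ``$\cconv(A)=B_X$ if and only if $A$ meets every slice of $B_X$'', you prove the contrapositive of the implication $\Rightarrow$ (if $A$ misses some slice $S(x^*,\alpha)$ then the continuous affine functional $\re x^*$ already exhibits points of $B_X$ outside $\cconv(A)$) and then declare the reverse implication ``immediate''. It is the other way around: the direction you wrote out is the immediate one, requiring no separation theorem, while the implication $\Leftarrow$ (equivalently: $\cconv(A)\subsetneq B_X$ implies $A$ misses a slice) is precisely where Hahn--Banach is needed, to produce a functional strictly separating a point $z\in B_X\setminus\cconv(A)$ from $\cconv(A)$ and hence a slice that $A$ avoids. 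This is a bookkeeping slip rather than a genuine gap, since both directions are true and well known; just swap the labels.
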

\noindent
(recall that a \emph{slice} of a subset $C$ of a Banach space $X$ is a non-empty intersection of $C$ with an open half-space).

The above characterization was further refined by R.~Shvidkoy in \cite{Shvidkoy00}, after he rediscovered a geometric lemma due to J.~Bourgain concerning convex combinations of slices. In particular, the following result was established, which we shall refer to as {\it Shvidkoy's Lemma}.

\begin{lem}[\mbox{Shvidkoy's lemma \cite[Lemma~3]{Shvidkoy00}}] \label{lemma:DP_Shvidkoy_characterization}
Let $X$ be a Banach space with the DPr. Then, for every $x\in S_X$ and $\eps>0$, the set $\{y\in B_X\colon \norm{x+y}>2-\eps\}$ is weakly dense in $B_X$. In other words, for every $x\in S_X$ and $y\in B_X$, we can find a net $(y_\alpha)$ in $B_X$ such that $y_\alpha\to y$ weakly and $\norm{x+y_\alpha}\to2$.  
\end{lem}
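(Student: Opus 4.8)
The plan is to reduce the statement to the slice formulation of the Daugavet property recorded in Proposition~\ref{prop:DP_slice_characterization}, using Bourgain's lemma on convex combinations of slices as the bridge between the weak topology and slices. Recall that a subset $A\subseteq B_X$ is weakly dense in $B_X$ precisely when every nonempty relatively weakly open subset $U$ of $B_X$ meets $A$. So, fixing $x\in S_X$ and $\eps>0$, it suffices to show that every such $U$ contains a point $z\in B_X$ with $\norm{x+z}>2-\eps$. First I would invoke Bourgain's lemma, which guarantees that $U$ contains a convex combination of slices of $B_X$; that is, there are slices $S_1,\dots,S_n$ of $B_X$ and weights $\lambda_1,\dots,\lambda_n\geq 0$ with $\sum_{i=1}^n\lambda_i=1$ such that $\sum_{i=1}^n\lambda_i S_i\subseteq U$. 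The task thus becomes to find points $y_i\in S_i$ for which $z=\sum_{i=1}^n\lambda_i y_i$ satisfies $\norm{x+z}>2-\eps$.

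The naive attempt would be to apply Proposition~\ref{prop:DP_slice_characterization} to each slice separately, producing $y_i\in S_i$ with $\norm{x+y_i}$ close to $2$, and to hope that the associated near-norming functionals can all be replaced by a single common functional $g\in S_{X^*}$, after which one would estimate $\re g(x+z)$ from below. This fails because distinct slices need not share a near-norming functional, so there is no reason for $\sum_i\lambda_iS_i$ to intersect a single slice of the form $\set{y\colon \re g(y)>1-\delta}$. The device that circumvents this is to build the points $y_i$ one slice at a time, re-centering the Daugavet property on the normalized partial sum at each stage.

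Concretely, I would set $u_0=x$ and, recursively, having produced $u_{i-1}=x+\sum_{j<i}\lambda_j y_j$ with $\norm{u_{i-1}}$ close to $1+\sum_{j<i}\lambda_j$, I would normalize to $\hat u_{i-1}=u_{i-1}/\norm{u_{i-1}}\in S_X$ and apply the slice formulation of Proposition~\ref{prop:DP_slice_characterization} to the unit vector $\hat u_{i-1}$ and the slice $S_i$. This yields $y_i\in S_i$ and a norming functional $h_i\in S_{X^*}$ with $\re h_i(\hat u_{i-1})>1-\delta_i$ and $\re h_i(y_i)>1-\delta_i$, whence, setting $u_i=u_{i-1}+\lambda_i y_i$,
\beq{
\norm{u_i}\geq \re h_i(u_i)=\norm{u_{i-1}}\,\re h_i(\hat u_{i-1})+\lambda_i\,\re h_i(y_i)\geq (\norm{u_{i-1}}+\lambda_i)(1-\delta_i).
}
A short induction then gives $\norm{u_n}\geq 2-2\sum_{i=1}^n\delta_i$ (here one uses $1+\sum_{j\leq i}\lambda_j\leq 2$ at each step to control the multiplicative losses), so choosing the tolerances with $\sum_{i=1}^n\delta_i<\eps/2$ produces $z=\sum_{i=1}^n\lambda_i y_i=u_n-x\in\sum_i\lambda_iS_i\subseteq U$ with $\norm{x+z}=\norm{u_n}>2-\eps$, as required.

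The crux of the argument, and the step I expect to be the main obstacle, is precisely this passage from ``$A$ meets every slice'' (which is essentially a restatement of the Daugavet property via Proposition~\ref{prop:DP_slice_characterization}) to ``$A$ meets every relatively weakly open set''. Bourgain's lemma carries out the topological reduction, but the genuinely delicate point is that a convex combination of slices cannot be controlled by a single functional; the resolution is the renormalization trick above, which keeps the active vector $\hat u_{i-1}$ on the unit sphere so that Proposition~\ref{prop:DP_slice_characterization} remains applicable at every stage while the norm gains accumulate multiplicatively. I would moreover write the estimates so as to isolate cleanly the only two inputs actually used, namely Bourgain's lemma and the single-slice Daugavet estimate, since this is presumably the template whose weak topology is later to be upgraded to the weak polynomial topology.
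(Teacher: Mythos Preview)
Your proposal is correct and follows precisely the route the paper attributes to Shvidkoy: invoke Bourgain's lemma to pass from a relatively weakly open set to a convex combination of slices, then use the slice characterization of the DPr iteratively with the renormalization $\hat u_{i-1}=u_{i-1}/\norm{u_{i-1}}$ to produce an $\eps$-quasi-codirected point inside that combination (the paper explicitly records this core step at the end of Section~\ref{sec:introduction}, pointing to \cite[Lemma~3.1.15]{KMRZW25} for a direct proof). Note that the paper does not supply its own proof of this lemma but cites it from \cite{Shvidkoy00}; your write-up matches that original argument.
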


Following the terminology of \cite[\S~2.6]{KMRZW25}, two elements $x,y\in B_X$ are called \emph{quasi-codirected} if they satisfy $\|x+y\|=2$; given $\eps>0$, we say that $x,y$ are  \emph{$\eps$-quasi-codirected} if they satisfy $\|x+y\|>2-\eps$.

As a consequence of Shvidkoy's lemma, it can be proved that a Banach space $X$ with the DPr cannot satisfy the CPCP (in fact, every relatively weakly open subset of $B_X$ has diameter 2) and that, in the separable case, its bidual contains many $L$-orthogonal elements (there is a weak-star $G_\delta$-dense subset $\mathcal{A}$ of $B_{X^{**}}$ such that $\|x+x^{**}\|=\|x\|+1$ for every $x\in X$ and every $x^{**}\in \mathcal{A}$, see \cite[Corollary 3.1.18]{KMRZW25} for this version and see \cite{AMCRZ23,LRZ2021} for further results in this line). Lemma~\ref{lemma:DP_Shvidkoy_characterization} can be also used to show that operators not fixing copies of $\ell_1$ on a Banach space with the DPr satisfy the Daugavet equation \cite[Theorem~4]{Shvidkoy00}.  

A natural extension of the DPr to polynomials was first introduced in \cite{CGMM07}, while the name appeared in the later paper \cite{CGMM08} (see Subsection~\ref{subsec:preliminaries} below for the necessary background on polynomials on Banach spaces). A Banach space $X$ has the \emph{polynomial Daugavet property} (\emph{polynomial DPr}, for short) if every weakly compact polynomial $P\colon X\to X$ satisfies the equation
\begin{equation*}
    \norm{\Id+P}=1+\norm{P}.
\end{equation*}
This definition is in fact equivalent to requiring the same just for rank-one polynomials \cite[Theorem~1.1]{CGMM07}.
As bounded linear operators are, in particular, polynomials, the polynomial DPr implies the DPr. 
In that same paper \cite{CGMM07}, the following geometric characterization of the polynomial DPr was provided.

\begin{prop}[\mbox{\cite[Corollary 2.2]{CGMM07}}]\label{prop:char-pol-DPr}
    A Banach space $X$ has the polynomial DPr if and only if it satisfies the following condition: given $x\in S_X$, $\eps>0$, and a norm-one scalar-valued polynomial $p\in \Pol(X)$, we can find $y\in B_X$ and a modulus-one scalar $\omega\in\K$ such that $\real \omega p(y)>1-\eps$ and $\norm{x+\omega y}>2-\eps$. 
\end{prop}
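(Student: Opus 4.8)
The statement is an equivalence, so the plan is to treat the two implications separately; the implication from the polynomial DPr to the geometric condition is a short rank-one computation, whereas the converse carries all the analytic weight.

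For the direct implication, assume $X$ has the polynomial DPr and fix $x\in S_X$, $\eps>0$, and a norm-one $p\in\Pol(X)$. I would test the Daugavet equation on the rank-one polynomial $P\colon z\mapsto p(z)\,x$. Its range lies in the line $\K x$, so $P(B_X)$ is bounded in a finite-dimensional subspace and hence relatively weakly compact; moreover $\norm{P}=\norm{x}\,\norm{p}=1$. The polynomial DPr then yields $\sup_{z\in B_X}\norm{z+p(z)x}=\norm{\Id+P}=2$, so for any prescribed error I can choose $z\in B_X$ with $\norm{z+p(z)x}>2-\eps$. The triangle inequality forces $\abs{p(z)}>1-\eps$, and taking $\omega\in\K$ of modulus one with $\omega p(z)=\abs{p(z)}$ gives $\re\omega p(z)=\abs{p(z)}>1-\eps$. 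Multiplying the vector $z+p(z)x$ by the isometry $\omega$ and replacing $\abs{p(z)}x$ by $x$ (an error of $1-\abs{p(z)}<\eps$) turns $\norm{z+p(z)x}>2-\eps$ into $\norm{x+\omega z}>2-2\eps$. Setting $y=z$ and relabelling $\eps$ completes this direction.

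For the converse, assume the geometric condition and let $P$ be a weakly compact polynomial; I only need the nontrivial inequality $\norm{\Id+P}\ge 1+\norm{P}$, i.e.\ for each $\eps>0$ a point $z\in B_X$ with $\norm{z+P(z)}>1+\norm{P}-\eps$. I would first pick $x_0\in B_X$ with $\norm{P(x_0)}>\norm{P}-\eps$, set $x:=P(x_0)/\norm{P(x_0)}\in S_X$, and choose $u^*\in S_{X^*}$ norming $P(x_0)$, so that $u^*(x)=1$. The scalar polynomial $q:=u^*\circ P$ then satisfies $\norm{P}\ge\norm{q}\ge\abs{q(x_0)}>\norm{P}-\eps$; normalising $p:=q/\norm{q}$ and applying the geometric condition to $x$, $p$, and a small $\delta$ produces $y\in B_X$ and a modulus-one $\omega$ with $\re\omega u^*(P(y))=\norm{q}\,\re\omega p(y)>(1-\delta)(\norm{P}-\eps)$ and $\norm{x+\omega y}>2-\delta$. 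Thus $P(y)$ is of almost maximal norm and $\omega y$ is quasi-codirected with $x=P(x_0)/\norm{P(x_0)}$.

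The main obstacle is to upgrade this data to a genuine near-maximiser of $\norm{z+P(z)}$, and here the non-homogeneity of $P$ is the real enemy. The natural test point is the rotated vector $\omega y$, for which $\norm{x+\omega y}\approx 2$ would pair perfectly with $P(\omega y)\approx P(x_0)=\norm{P(x_0)}x$; but since $P(\omega y)\ne\omega P(y)$ in general, the near-maximality controlled at $y$ does not transfer to $\omega y$. Conversely, testing at $y$ itself fails because $y$ is only codirected with $\bar\omega x$ rather than $x$. I expect the resolution to exploit the weak compactness of $P$: running the condition for $\delta\to 0$ produces a net $(y_\delta,\omega_\delta)$ with the $\omega_\delta$ lying in the compact set of modulus-one scalars of $\K$ and with $\bigl(P(y_\delta)\bigr)$ ranging in the relatively weakly compact set $\overline{P(B_X)}^{\,w}$, so one can pass to weak limits along which $u^*$ (and any fixed functional norming $x$) is continuous. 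Converting this weak control into the required norm-proximity of $P(z)$ to $\norm{P}\,x$ at a point $z$ that still remains quasi-codirected with $x$ is precisely where a Bourgain-type argument on convex combinations of slices is needed, and this step — extracting norm information about the non-homogeneous map $P$ at a quasi-codirected point from purely weak data — is the crux of the proof.
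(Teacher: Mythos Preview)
The paper does not prove this proposition; it is quoted from \cite{CGMM07} as background, so there is no in-paper argument to compare against. Your forward implication is fine. The converse, however, is left genuinely incomplete, and the difficulty you isolate is partly misdiagnosed.

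The standard route (and the one in \cite{CGMM07}) is a two-step reduction. First, one shows that the geometric condition forces the Daugavet equation for every \emph{rank-one} polynomial $P(z)=p(z)\,y_0$; second, one invokes the separate (and by now classical) fact that the Daugavet equation for rank-one polynomials implies it for all weakly compact ones. You try to attack a general weakly compact $P$ directly through $u^*\circ P$, which is why you lose control: the functional $f$ that almost norms $x+\omega y$ need not see $P(y)$ at all when $P$ is not rank-one.

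For the rank-one step, the right test point is $z=y$, not $z=\omega y$, and the $\omega$ in the characterisation is exactly what makes this work. With $x:=y_0/\norm{y_0}$, the condition gives $y\in B_X$ and $|\omega|=1$ with $\re\omega p(y)>1-\eps$ and $\norm{x+\omega y}>2-\eps$. Choose $f\in S_{X^*}$ with $\re f(x+\omega y)>2-\eps$; then $f(x)$ and $\omega f(y)$ are both within $\sqrt{2\eps}$ of $1$, and likewise $\omega p(y)$ is within $\sqrt{2\eps}$ of $1$. Since $P(y)=p(y)\,\norm{y_0}\,x$ is automatically a scalar multiple of $x$, one gets
\[
f\bigl(y+P(y)\bigr)=f(y)+p(y)\,\norm{y_0}\,f(x)\approx \bar\omega+\bar\omega\,\norm{y_0}=\bar\omega\,(1+\norm{P}),
\]
hence $\norm{y+P(y)}\ge |f(y+P(y))|>1+\norm{P}-C\sqrt{\eps}$. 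No weak-compactness, nets, or Bourgain-type extraction are needed here. Your proposed resolution via weak limits of $P(y_\delta)$ does not close the gap, because weak convergence of $P(y_\delta)$ gives no norm control of $y_\delta+P(y_\delta)$, and the ``Bourgain-type'' step you allude to is doing all the work without being specified. The missing idea is precisely the rank-one reduction together with the observation that, for rank-one $P$, the image $P(y)$ already lies on the line through $x$, so the single functional $f$ controls both summands.
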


It was proved in \cite{CGMM07} that the polynomial DPr coincides with the DPr in $C(K)$-spaces and in some spaces of vector-valued functions, and it was observed that a closely related property, the alternative polynomial Daugavet property, differs from its linear version.
The question of whether or not the polynomial DPr and the DPr always coincide, while implicit in \cite{CGMM07,CGMM08}, was first explicitly posed in \cite[Problem 1.2]{MMP10}. In that paper, the authors provide a positive answer in the case of $L_1(\mu)$-spaces and their vector-valued counterparts.  This question was since investigated in various settings, and positive answers was obtained for the following classes of Banach spaces: vector-valued uniform algebras in \cite{CGKM14}, representable Banach spaces in \cite{BS16}, $L_1$-preduals, spaces of Lipschitz-functions, and nicely embedded subspaces of some $C_b(\Omega)$-spaces in \cite{MRZ22}, $C^*$-algebras and JB$^*$-triples with some partial results in \cite{Santos14} and a full solution in \cite{CMP24}. We refer to the recent survey \cite{DGMS23} for an account in this direction.

As announced, the main goal of this paper is to provide a complete positive solution to this problem - namely, that the Daugavet property is equivalent to the polynomial Daugavet property.

To motivate our approach, let us observe that, in view of the above geometric characterization of the polynomial DPr, it is clear that Shvidkoy's Lemma immediately gives the polynomial DPr from the classical DPr provided that the polynomials we are working with are weakly continuous on $B_X$, see \cite[Theorem~2.3]{CMP24}. However, it is known that in infinite dimensional Banach spaces, the only weakly continuous homogeneous polynomials are those of finite type, i.e.\ of the form $x\mapsto \sum_{j=1}^m\alpha_j\varphi_j^N(x)$ with $\alpha_j\in\K$ and $\varphi_j\in X^*$ (see e.g.\ the short paragraph following \cite[Proposition~2.4]{AP80}). Furthermore, it follows from the results in \cite{FGGL83} (see also \cite[Proposition 2.36 and its proof]{Din99}) that in any Banach space that contains a copy of $\ell_1$ (hence, in particular, in any space with the Daugavet property), there exist polynomials which are weakly sequentially continuous but not weakly continuous on bounded subsets. Therefore, a complete answer to our problem cannot be expected as a straightforward consequence of Shvidkoy’s Lemma (see \cite[Section~2]{CMP24} for a more detailed discussion).

A general strategy to deal with the polynomial DPr has been thus to work with sequences instead of nets. In general, polynomials need not be weakly sequentially continuous (consider e.g.\ the polynomial $x\mapsto \scal{x,x}$ on a Hilbert space), but it is well known that if a Banach space $X$ has the Dunford--Pettis property, then all polynomials on $X$ are weakly sequentially continuous, see \cite[Proposition 2.34]{Din99}. In essence, for all the previously mentioned examples, the polynomial DPr was obtained by first constructing suitable approximating sequences in the considered space or its bidual (using Rademacher sequences for $L_1(\mu)$ spaces as in \cite{MMP10}, or constructing some $c_0$ sequences in the space as in \cite{CGMM07,CGMM08} or in the bidual space as in \cite{MRZ22}) and then, by using the weak sequential continuity of polynomials (or their Aron--Berner extensions) for these sequences. One notable exception is the paper \cite{CMP24}, where a completely different topology (more precisely, the strong$^*$ topology) was used to reach the desired conclusion. However, observe that the key point there was again to show some sequential continuity for polynomials under this specific topology (see \cite[Corollary 3.7]{CMP24}). 

While these sequential approaches are interesting by themselves and may bring many information on the considered spaces, there is again little hope that they would provide a complete solution to the polynomial DPr problem, since there is a known example of a Banach space with the DPr and the Schur property \cite{KW04}.

Our approach will follow a different route from those previously explored. Indeed, we will be working directly with the natural topology on the unit ball induced by polynomials, the weak polynomial topology (see Subsection \ref{subsec:preliminaries} for the precise definition).
We will use ``locally'' the $\eps$-quasi-codirected point  argument from Shvidkoy's Lemma \ref{lemma:DP_Shvidkoy_characterization} while revisiting the construction of approximating nets for this topology used by A.M.~Davie and T.W.~Gamelin in the proof of \cite[Theorem~2]{DG89} to produce a Goldstine-type result for the polynomial-star topology of the bidual space. As an outcome of this, we get our main result which is the following version of Shvidkoy's Lemma for the weak polynomial topology.

\begin{thm*}[\mbox{\textrm Theorem~\ref{thm:polynomial_Shvidkoy_result}}]
    Let $X$ be a Banach space with the DPr. Then, for every $x \in S_X$ and for every $\eps>0$, the set $\{y\in B_X\colon \norm{x+y}>2-\eps\}$ is dense in $B_X$ for the weak polynomial topology of $X$.
\end{thm*}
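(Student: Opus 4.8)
The plan is to realize the target point as a long convex combination $y=\frac1m\sum_{j=1}^m w_j$ of points $w_j\in B_X$ chosen one at a time, so that two effects are produced simultaneously: the averaging kills the ``higher order'' contribution of the polynomials (this is the Davie--Gamelin mechanism), while an iterative application of the $\eps$-quasi-codirected argument forces $\norm{x+y}$ to be close to $2$ (this is the Daugavet input). Fix $y_0\in B_X$ and a basic weak polynomial neighbourhood of $y_0$; since the homogeneous components of a polynomial are again polynomials, I may assume it is determined by finitely many homogeneous polynomials $P_1,\dots,P_r$ of degrees $d_1,\dots,d_r$, together with a tolerance $\delta>0$, and I write $\check P_s$ for the symmetric $d_s$-linear form associated with $P_s$. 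The goal is to produce $y\in B_X$ with $\abs{P_s(y)-P_s(y_0)}<\delta$ for every $s$ and $\norm{x+y}>2-\eps$.

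First I would set up the codirection. Put $a_0=x$ and, having chosen $w_1,\dots,w_{k-1}$, set $a_{k-1}=x+\frac1m\sum_{j<k}w_j$, so that $\norm{a_{k-1}}\ge 1$. Applying Shvidkoy's Lemma to the unit vector $\hat a_{k-1}=a_{k-1}/\norm{a_{k-1}}$ and to a prescribed weak neighbourhood $U_k$ of $y_0$ (to be specified below), I obtain $w_k\in U_k\cap B_X$ with $\norm{\hat a_{k-1}+w_k}>2-\eps_k$, where $\eps_k$ is of order $\eps/m$. Choosing a norming functional $\psi_k\in S_{X^*}$ for $\hat a_{k-1}+w_k$ forces $\re\psi_k(\hat a_{k-1})>1-\eps_k$ and $\re\psi_k(w_k)>1-\eps_k$, whence $\norm{a_k}\ge\re\psi_k(a_k)>\norm{a_{k-1}}(1-\eps_k)+\frac1m(1-\eps_k)$. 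Iterating this multiplicative estimate over $k=1,\dots,m$ while keeping $\sum_k\eps_k$ small yields $\norm{x+y}=\norm{a_m}>2-\eps$. This is exactly the ``local'' use of the quasi-codirected argument: at each step the Daugavet property, through Shvidkoy's Lemma, supplies a single codirected point inside an arbitrary weak neighbourhood, and the norm of the running average grows by roughly $1/m$ per step. It is this iterative scheme, rather than any common norming functional, that makes a convex combination of codirected points codirected.

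It then remains to choose the neighbourhoods $U_k$ so that the averaging pins the polynomial values. Expanding, $P_s(y)=m^{-d_s}\sum_{(j_1,\dots,j_{d_s})}\check P_s(w_{j_1},\dots,w_{j_{d_s}})$. The multi-indices with a repeated entry number $O(m^{d_s-1})$ and are uniformly bounded, so after the normalisation they contribute only $O(1/m)$, while the $m(m-1)\cdots(m-d_s+1)$ multi-indices with pairwise distinct entries carry almost all the mass. For such a multi-index I order the entries and replace the $w$'s by $y_0$ one slot at a time, starting from the largest index; the telescoping difference is a sum of terms of the form $\check P_s(w_{i_1},\dots,w_{i_{l-1}},\,w_{i_l}-y_0,\,y_0,\dots,y_0)$ with $i_1<\dots<i_l$. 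Since $i_l$ exceeds the earlier indices, at the moment $w_{i_l}$ is chosen I may include among the finitely many defining functionals of $U_{i_l}$ the maps $z\mapsto\check P_s(w_{i_1},\dots,w_{i_{l-1}},z,y_0,\dots,y_0)$ for all already chosen $w_{i_1},\dots,w_{i_{l-1}}$, all indices $s$, and all fillings of the remaining slots by $y_0$, thereby forcing each such term to be as small as desired. Collecting the estimates gives $\abs{P_s(y)-P_s(y_0)}<\delta$ once the per-step weak tolerance and $m$ are tuned. Each $U_k$ is an honest weak neighbourhood of $y_0$ depending only on $y_0$, the fixed data, and the previously chosen $w_1,\dots,w_{k-1}$, so the induction is legitimate and compatible with the codirection step, which also only requires $w_k$ to lie in a weak neighbourhood of $y_0$.

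The main obstacle is precisely the tension between these two demands on the single point $y$: the polynomial constraints want $y$ and each $w_j$ to stay weakly near $y_0$, whereas codirection with $x$ pushes $y$ towards the Daugavet direction, and a naive average of codirected points need not be codirected. The observation that resolves this is that both requirements are ultimately weak-neighbourhood conditions on the individual $w_k$: the Davie--Gamelin averaging reduces polynomial closeness to progressively finer weak closeness, while the iterative scheme reduces codirection of the average to finding, at each step, one codirected point in a weak neighbourhood, which is exactly what Shvidkoy's Lemma provides. The remaining difficulties are bookkeeping, namely tuning the per-step tolerances $\eps_k$ of order $\eps/m$ so that the multiplicative norm recursion still reaches $2-\eps$, and organising the nested neighbourhoods so that the $O(1/m)$ diagonal error and the off-diagonal telescoping error are controlled at once.
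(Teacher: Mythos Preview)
Your proposal is correct and follows essentially the same approach as the paper: both combine an iterated application of Shvidkoy's lemma (to force $\eps$-quasi-codirection of the running partial sums with $x$) with the Davie--Gamelin averaging mechanism (reducing polynomial closeness to finitely many linear constraints on each new point $w_k$, determined by the previously chosen $w_1,\dots,w_{k-1}$ and $y_0$). The only differences are cosmetic: you scale by $1/m$ along the way and phrase the norm recursion via norming functionals, whereas the paper keeps the unscaled sum, uses the convexity estimate of Remark~\ref{rem:l_1^2_identity}, and extracts the final codirection via a single Hahn--Banach functional at the end, citing \cite[Section~3]{DG89} for the diagonal/off-diagonal bookkeeping you spell out explicitly.
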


As a consequence, we will obtain that the polynomial DPr and the DPr are equivalent properties.

\begin{cor*}[\mbox{\textrm Corollary~\ref{DPr-polynomialDPr}}]
A Banach space $X$ satisfies the DPr if and only if it satisfies the polynomial DPr.
\end{cor*}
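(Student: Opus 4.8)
The plan is to deduce the corollary from the weak-polynomial version of Shvidkoy's lemma (the Theorem above) by feeding it into the geometric characterization of the polynomial DPr in Proposition~\ref{prop:char-pol-DPr}. One implication is free: since every bounded linear operator is a polynomial (and a weakly compact operator is a weakly compact polynomial), the polynomial DPr trivially forces the Daugavet equation on all rank-one operators, hence implies the DPr. So the whole content lies in the converse, and I would only have to verify the condition of Proposition~\ref{prop:char-pol-DPr} for a space $X$ with the DPr.

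To that end, fix $x\in S_X$, $\eps>0$, and a norm-one scalar polynomial $p\in\Pol(X)$. Since $\norm{p}=1$, I would first choose $z\in B_X$ with $\abs{p(z)}>1-\eps/2$ and then pick a modulus-one scalar $\omega\in\K$ (namely $\omega=\overline{\sign p(z)}$) so that $\omega p(z)=\abs{p(z)}$ is real and $\real\,\omega p(z)>1-\eps/2$. The remaining task is to perturb $z$ into a point that is simultaneously $\eps$-quasi-codirected (after the twist by $\omega$) with $x$, while barely moving the value of $p$.

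Here is where the Theorem enters. I would apply it not to $x$ but to the rotated unit vector $\bar\omega x\in S_X$, which yields that the set $D=\{y\in B_X\colon \norm{\bar\omega x+y}>2-\eps\}$ is dense in $B_X$ for the weak polynomial topology. The crucial point is that $p$ is, by its very definition, continuous for the weak polynomial topology, so $U=\{y\in B_X\colon \abs{p(y)-p(z)}<\eps/2\}$ is a (relatively) weak-polynomial open neighborhood of $z$. Density then produces a point $y\in D\cap U$. For this $y$ one has, on the one hand, $\real\,\omega p(y)\ge \real\,\omega p(z)-\abs{p(y)-p(z)}>1-\eps$, and on the other hand, using $\abs{\omega}=1$ to rotate the sum back, $\norm{x+\omega y}=\norm{\omega(\bar\omega x+y)}=\norm{\bar\omega x+y}>2-\eps$. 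These are exactly the two inequalities required by Proposition~\ref{prop:char-pol-DPr}, so $X$ has the polynomial DPr.

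Since the Theorem is assumed, the corollary is essentially bookkeeping; the only genuinely delicate points are the two pieces of rotation bookkeeping, namely introducing the modulus-one scalar $\omega$ early (to absorb the scalar freedom allowed in Proposition~\ref{prop:char-pol-DPr}) and applying the density statement at $\bar\omega x$ rather than at $x$, so that a single translation by $\omega$ simultaneously pins down the value of $p$ and the codirectedness with $x$. I expect no real obstacle beyond confirming that $p$ does generate basic neighborhoods for the weak polynomial topology, which is immediate from the definition of that topology as the initial topology induced by all scalar-valued polynomials.
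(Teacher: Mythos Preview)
Your argument is correct and follows essentially the same route as the paper's proof: both invoke Proposition~\ref{prop:char-pol-DPr}, pick a point where $\omega p$ is almost $1$, apply the weak-polynomial Shvidkoy result (Theorem~\ref{thm:polynomial_Shvidkoy_result}) at the rotated point $\bar\omega x$, and use the weak-polynomial continuity of $p$ to conclude. The only cosmetic differences are that you phrase the Theorem via density of the set rather than via nets, and you use an explicit $\eps/2$ margin where the paper simply passes to a far-enough term of the net.
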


It was previously unknown whether the polynomial Daugavet property transfers from the dual space to the space itself - a problem that has now been completely solved. In particular, \cite[Problem 4.4]{CMP24} has been positively answered, establishing that the polynomial Daugavet property of the predual of a JBW$^*$-triple (or even of a von Neumann algebra) is equivalent to that of the JBW$^*$-triple (or the von Neumann algebra) itself. This is so because the equivalence is already known for the linear DPr \cite[Theorem~3.2]{BM05}. 

Using similar methods, we also produce a Shvidkoy-like results for linear Daugavet centers - see Theorem \ref{theorem:Shvidkoy-for-Daugavetcenters} - and for the weak operator Daugavet property (WODP, for short) - see Theorem \ref{thm:polynomial_WODP_Shvidkoy}. We send the reader to the definitions of these two concepts in Sections \ref{sec:polynomial_DP} and \ref{sec:polynomial_WODP}, respectively, and recall that the WODP implies the DPr. As these properties have their counterparts for polynomials (which were formally stronger properties), what we get is the following.

\begin{cor*}[\mbox{\textrm Corollaries~\ref{Corollary-2.9} and \ref{cor:WODP-polynomialWODP}.(a)}]
\begin{samepage}
Let $X$ and $Y$ be Banach spaces. \parskip=0pt
    \begin{enumerate}
        \item[\textup{(a)}] If $G \in \mathcal{L}(X,Y)$ is a Daugavet center, then it is a polynomial Daugavet center.
        \item[\textup{(b)}] If $X$ has the WODP, then $X$ has the polynomial WODP. 
    \end{enumerate}   
\end{samepage}
\end{cor*}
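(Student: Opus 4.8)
The plan is to mirror, for each of the two properties, the route by which the main theorem and Proposition \ref{prop:char-pol-DPr} together yield the equivalence of the DPr and the polynomial DPr. Concretely, for each property I would assemble two ingredients: a \emph{polynomial geometric characterization} in the spirit of Proposition \ref{prop:char-pol-DPr}, and a \emph{weak polynomial Shvidkoy-type density theorem} in the spirit of the main theorem (these are Theorems \ref{theorem:Shvidkoy-for-Daugavetcenters} and \ref{thm:polynomial_WODP_Shvidkoy}). Granting the density theorems, each implication reduces to a short argument whose only ingredient is that every scalar-valued polynomial is, by the very definition of the weak polynomial topology, continuous for that topology.

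For part (a), normalize $\|G\|=1$. First I would record the analogue of Proposition \ref{prop:char-pol-DPr}: reducing the polynomial Daugavet equation $\|G+P\|=1+\|P\|$ to rank-one polynomials $P=p(\cdot)\,y_0$ with $p\in\Pol(X)$ and $y_0\in S_Y$, exactly as in \cite{CGMM07}, one sees that $G$ is a polynomial Daugavet center if and only if for every $y\in S_Y$, every $\eps>0$, and every norm-one $p\in\Pol(X)$ there exist $x\in B_X$ and a modulus-one scalar $\omega$ with $\re\,\omega p(x)>1-\eps$ and $\|\omega Gx+y\|>2-\eps$. To verify this, choose $x_0\in B_X$ with $|p(x_0)|>1-\eps/2$ and the modulus-one scalar $\omega$ so that $\re\,\omega p(x_0)=|p(x_0)|$. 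The set $U=\{x\in B_X\colon |p(x)-p(x_0)|<\eps/2\}$ is a weak polynomial neighborhood of $x_0$ because $p$ is a polynomial, so Theorem \ref{theorem:Shvidkoy-for-Daugavetcenters}, applied to the vector $\bar\omega y\in S_Y$, produces some $x\in U$ with $\|Gx+\bar\omega y\|>2-\eps$. Then $\re\,\omega p(x)\ge\re\,\omega p(x_0)-|p(x)-p(x_0)|>1-\eps$ and $\|\omega Gx+y\|=\|Gx+\bar\omega y\|>2-\eps$, which is what the characterization requires.

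Part (b) follows the same template. The WODP is a finitely quantified, operator-flavoured strengthening of the slice formulation of the DPr, and it admits a polynomial counterpart with a geometric characterization again in the style of Proposition \ref{prop:char-pol-DPr}, now carrying a finite family of norm-one scalar polynomials (or the corresponding operator datum). Each such polynomial constraint again defines a weak polynomial neighborhood; since the weak polynomial topology is a vector topology and hence stable under finite intersections, one can feed all the constraints simultaneously into the density statement of Theorem \ref{thm:polynomial_WODP_Shvidkoy}, and the modulus-one rotation is handled precisely as in part (a). This yields the polynomial WODP.

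I expect the actual difficulty to lie entirely upstream of these deductions, in establishing the two density theorems themselves, that is, in proving the analogues of the main theorem in the present settings. For Daugavet centers this means rerunning the Davie--Gamelin approximating-net construction together with the local $\eps$-quasi-codirected argument of Shvidkoy's Lemma \ref{lemma:DP_Shvidkoy_characterization}, while tracking the asymmetric role of $G$ (the target vector now lives in $S_Y$ whereas the approximating net lives in $B_X$); for the WODP it means carrying out that construction uniformly across the finite operator data built into the property. A secondary, more bookkeeping-type point is to pin down the two polynomial geometric characterizations, in particular the reduction from arbitrary weakly compact polynomials to rank-one ones, so that the density theorems can be plugged in cleanly. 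Once these are in hand, the corollary follows exactly as sketched above.
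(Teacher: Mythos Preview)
Your proposal is correct and matches the paper's approach: derive each implication from the corresponding weak-polynomial Shvidkoy-type density theorem (Theorems~\ref{theorem:Shvidkoy-for-Daugavetcenters} and~\ref{thm:polynomial_WODP_Shvidkoy}) together with the appropriate geometric characterization, exactly as Corollary~\ref{DPr-polynomialDPr} is deduced from Theorem~\ref{thm:polynomial_Shvidkoy_result}.

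Two small corrections, neither of which affects the argument. First, the weak polynomial topology is \emph{not} in general a vector topology (the paper itself notes this, citing \cite{GJL05}); fortunately, all you need is that finite intersections of open sets are open, which holds in any topology. Second, the polynomial WODP as defined in the paper involves a \emph{single} norm-one polynomial $p\in\Pol(X)$, not a finite family, and it \emph{is} already the geometric characterization---there is no separate reduction step to perform. So part~(b) is actually simpler than you anticipate: pick $y_0\in B_X$ and a modulus-one $\omega$ with $\re\,\omega p(y_0)>1-\delta$, and use the weak polynomial density of $\OF(x_1,\dots,x_n;x',\eps)$ from Theorem~\ref{thm:polynomial_WODP_Shvidkoy} to find $y$ in that set with $p(y)$ close to $p(y_0)$.
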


As a consequence of the second statement of the last corollary and the result \cite[Theorem 5.12]{MRZ22}, we will get new examples of Banach spaces with the DPr. Indeed, we have the following result.

\begin{cor*}[\mbox{\textrm Corollary~\ref{cor:WODP-polynomialWODP}.(b)}]  
    Let $X$ be a Banach space with the WODP. Then, for every $N\in\N$, the $N$-fold symmetric tensor product $\widehat{\otimes}_{\pi,s,N}X$ of $X$ has the WODP, hence the DPr. In particular, the following Banach spaces $X$ are such that $\widehat{\otimes}_{\pi,s,N}X$ has the DPr. \parskip=0pt 
     \begin{enumerate}
\item[\textup{(b1)}] $L$-embedded spaces $X$ satisfying the MAP and the DPr, with $\dens(X) \leq \omega_1$.
\item[\textup{(b2)}] $X=W_1 \widehat{\otimes}_{\pi} W_2$, where $W_1$ and $W_2$ are $L_1$-preduals with the DPr, or $L_1(\mu, Y)$-spaces for $\mu$ atomless and $Y$ arbitrary, or spaces as in (a). 
    \end{enumerate} 
\end{cor*}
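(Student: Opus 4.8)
The plan is to derive the statement by chaining the second part of the preceding corollary with the transfer theorem \cite[Theorem 5.12]{MRZ22}. First I would assume $X$ has the WODP and apply part (b) of the previous corollary to upgrade this to the polynomial WODP of $X$. Then I would invoke \cite[Theorem 5.12]{MRZ22}, which, granting the polynomial WODP of $X$, yields that the $N$-fold projective symmetric tensor product $\widehat{\otimes}_{\pi,s,N}X$ has the WODP for every $N\in\N$. Since the WODP implies the DPr, this already gives that $\widehat{\otimes}_{\pi,s,N}X$ has the DPr, which is the general assertion. I want to stress that the only genuinely new ingredient in this chain is the implication WODP $\Rightarrow$ polynomial WODP, itself a consequence of the polynomial Shvidkoy-type result Theorem \ref{thm:polynomial_WODP_Shvidkoy}; the tensorial step is quoted directly from \cite{MRZ22}.

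For the two explicit families, the strategy reduces to checking that each listed space $X$ has the WODP, after which the general assertion applies without change. For (a), I would cite the result of \cite{MRZ22} that an $L$-embedded space $Z$ enjoying the MAP and the DPr with $\dens(Z)\leq\omega_1$ has the WODP. For (b), I would use that $L_1$-preduals with the DPr and the spaces $L_1(\mu,Y)$ with $\mu$ atomless (and $Y$ arbitrary), as well as the spaces from (a), are shown in \cite{MRZ22} to have the WODP, together with the stability of the WODP under the relevant projective tensor products established there; feeding each of these factors into the general assertion covers every case of (b).

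I do not anticipate a substantial obstacle in the present argument, since the hard analytic work has been externalized: the delicate passage from the linear to the polynomial WODP is supplied by part (b) of the previous corollary, hence ultimately by Theorem \ref{thm:polynomial_WODP_Shvidkoy}, while both the tensorial transfer and the WODP of the example classes are taken from \cite{MRZ22}. The only point demanding genuine care is to ensure that the hypotheses recorded in (a) and (b) coincide precisely with those under which \cite{MRZ22} guarantees the WODP, so that the cited results apply with no gap.
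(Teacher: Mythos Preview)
Your proposal is correct and follows essentially the same route as the paper: chain WODP $\Rightarrow$ polynomial WODP (via Theorem~\ref{thm:polynomial_WODP_Shvidkoy}) with \cite[Theorem~5.12]{MRZ22} to get the WODP of $\widehat{\otimes}_{\pi,s,N}X$, and then verify that the listed families have the WODP. The only correction is bibliographic: the WODP of the $L$-embedded spaces in (a) is established in \cite[Theorem~3.1]{DJRZ22}, not in \cite{MRZ22}, while for (b) the paper cites \cite[Theorem~5.4]{MRZ22} together with \cite{RZTV21}.
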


Let us finish this introduction by mentioning that we do not know whether there is a Bourgain-type lemma for the weak polynomial topology of the unit ball of a Banach space $X$, i.e.\ a result saying that every non-empty relatively open subset of $B_X$ for the weak polynomial topology contains a convex combination of slices. Such a result would immediately provide the Shvidkoy-type result that we get in this paper (Theorem~\ref{thm:polynomial_Shvidkoy_result}), as in the proof of \cite[Lemma~3]{Shvidkoy00} it is actually shown the following: {\slshape let $X$ be a Banach space with the DPr, let $U \subset B_X$ 
be a subset that contains a convex combination of slices of $B_X$, $x_0 \in S_X$, and $\eps > 0$; then there is $z \in U$ such that $\|x_0 +z\| > 2 - \eps$} (see \cite[Lemma 3.1.15]{KMRZW25} for a direct proof of this fact). But there are several indications that this kind of result might not always hold. On one hand, the weak polynomial topology is not always a linear topology (see \cite{GJL05} and references therein for more information), so it is possible that none of the standard geometric tools from topological vector spaces will be available for this topology. On the other hand, there are spaces where all convex combination of slices of the unit ball are relatively weakly open, as $c_0$ or have non-empty relative weak interior, as $L_1[0,1]$ (see \cite{LPMRZ19,HKP25} and references therein), and this could possibly conflict with the fact that the relative weak and weak polynomial topologies on the unit ball do not always coincide. It is worth noting also that our methods do not seem to be applicable to a possible proof that the DPr implies the WODP. Therefore, it seems that this problem is still open (this was explicitly stated after \cite[Remark 5.3]{MRZ22}).

\subsection{Outline}
The paper is structured as follows. We finish this introduction with a short subsection containing the notation and the needed definitions related to polynomials and the necessary background in order to avoid the reader jumping into specific references very often. In Section \ref{sec:mainresults} we provide our results. In Subsection~\ref{sec:polynomial_DP}, we prove our new version of Shvidkoy's lemma for the DPr valid for the weak polynomial topology, which provides the equivalence between the DPr and the polynomial DPr. In subsection \ref{sec:Daugavetcenter} we consider Daugavet centers and prove that every linear Daugavet center is a polynomial Daugavet center. In Subsection~\ref{sec:polynomial_WODP}, we establish a Shvidkoy-type lemma for the WODP valid for the weak polynomial topology which provides the equivalence between the WODP and its polynomial counterpart, and  we present an application of it to get new examples of Banach spaces for which the $N$-fold symmetric tensor product has the Daugavet property. It is worth noting that all the results in this paper hold for both real and complex Banach spaces.

\subsection{Notation, definitions, and preliminaries}\label{subsec:preliminaries}
Since this paper deals with polynomials on Banach spaces, it is necessary to introduce some additional machinery. Let $X, Y$ be Banach spaces over $\mathbb{K}$ and $N\in\N$. Recall that a mapping $P\colon X\to Y$ is called a continuous \emph{$N$-homogeneous polynomial} if we can find an $N$-linear bounded symmetric map $F\colon X^N\to Y$ (meaning that the following equality $F(x_{\sigma(1)},\dots,x_{\sigma(N)})=F(x_1,\dots, x_N)$ holds true for every permutation $\sigma$ of $\{1,\dots,N\}$ and for every $N$-tuple $(x_1,\dots,x_N) \in X^N$) such that $P(x)=F(x,\dots,x)$ for every $x\in X$. We denote by $\mathcal{P}(^N X, Y)$ the Banach space of all $N$-homogeneous continuous polynomials endowed with the supremum of the norm of the images of $B_X$ as (complete) norm. When $Y$ is the base field, we just write $\mathcal{P}(^N X)=\mathcal{P}(^N X,\K)$. A mapping $P\colon X\to Y$ is called a \emph{polynomial} if it is a finite linear combination of homogeneous continuous polynomials (we consider the constant mappings to be the $0$-homogeneous polynomials and we note that the bounded linear operators are the $1$-homogeneous continuous polynomials). Observe that we only consider in this paper continuous polynomials, so the word ``polynomial'' always suppose its continuity. We denote by $\Pol(X,Y)$ the set of all continuous polynomials from $X$ to $Y$, and by $\Pol(X)$ the set of all scalar-valued continuous polynomials on $X$. A polynomial $P\in \Pol(X,Y)$ is said to be \emph{weakly compact} if $P(B_X)$ is a relatively weakly compact subset of $Y$; $P$ is a \emph{rank-one} polynomial if $P(x)=p(x)y_0$ $\forall x\in X$, where $p\in \Pol(X)$ and $y_0\in Y$. We send the reader to \cite{Din99, Mujica86, HJ14} for a complete background on this topic.

Our main tool here will be the \emph{weak polynomial topology} on a Banach space $X$. It was introduced in \cite{CCG89} as the smallest topology on $X$ which makes all the scalar-valued polynomials on $X$ continuous. Equivalently, it is the smallest topology on $X$ for which a net $(x_\alpha)$ in $X$ converges to a point $x$ in $X$ if and only if $p(x_\alpha)\to p(x)$ for every $p\in\Pol(X)$. Finally, let us recall that the \emph{polynomial-star topology} of $X^{**}$ is the topology defined in \cite{DG89} as the smallest topology on $X^{**}$ for which a net $(x_\alpha)$ in $X^{**}$ converges to a point $x$ in $X^{**}$ if and only if $\widehat{p}(x_\alpha)\to \widehat{p}(x)$ for every scalar-valued polynomial $p$ on $X$, where $\widehat{p}$ denotes the Aron--Berner extension of $p$ to $X^{**}$ \cite{AB78}. It is shown in \cite[Theorem 2]{DG89} that $B_{X^{**}}$ is equal to the polynomial-star closure of $B_X$ in $X^{**}$, providing a polynomial-star analogue to the classical Goldstine theorem.

\section{The main results} \label{sec:mainresults}

\subsection{The DPr and the polynomial DPr are equivalent} \label{sec:polynomial_DP}

We will now state and prove the main result of the paper. 

\begin{thm}\label{thm:polynomial_Shvidkoy_result}
    Let $X$ be a Banach space with the DPr. Then, for every $x\in S_X$ and $y\in B_X$, we can find a net $(y_\alpha) \subseteq B_X$ which converges to $y$ in the weak polynomial topology and such that $\norm{x+y_\alpha}\to 2$. 
\end{thm}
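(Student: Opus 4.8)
The plan is to upgrade Shvidkoy's Lemma \ref{lemma:DP_Shvidkoy_characterization} from the weak topology to the weak polynomial topology. First I would reduce to a local statement: directing the net by basic weak polynomial neighbourhoods of $y$ intersected with decreasing tolerances, it suffices to show that for every $x\in S_X$, every $y\in B_X$, every finite family $p_1,\dots,p_n\in\Pol(X)$, and every $\eps,\delta>0$, one can find $z\in B_X$ with $\abs{p_i(z)-p_i(y)}<\delta$ for all $i$ and $\norm{x+z}>2-\eps$. Since each $p_i$ splits into finitely many homogeneous components, I may assume the $p_i$ are homogeneous. The degree-one case is nothing but Shvidkoy's Lemma itself, because linear functionals are weakly continuous; the entire difficulty comes from the higher-degree components, which are not weakly continuous on $B_X$.

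To deal with them I would route the argument through the bidual and borrow the polynomial-star Goldstine theorem of Davie--Gamelin \cite{DG89} as a black box. The point is to first produce a bidual witness: an element $u\in B_{X^{**}}$ such that $\abs{\widehat{p_i}(u)-p_i(y)}<\delta/2$ for all $i$ and $\norm{x+u}>2-\eps/2$, where the latter is realised by a single functional, i.e.\ $\re\scal{x+u,\varphi}>2-\eps/2$ for some $\varphi\in S_{X^*}$. Granting such a $u$, I would approximate it in the polynomial-star topology by some $z\in B_X$, applying the Davie--Gamelin theorem to the finite list $p_1,\dots,p_n,\varphi$. This yields $p_i(z)\approx\widehat{p_i}(u)\approx p_i(y)$ and, crucially, $\varphi(z)\approx\scal{u,\varphi}$, because the Aron--Berner extension of the linear functional $\varphi$ is just $\widehat{\varphi}(u)=\scal{u,\varphi}$. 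Hence $\norm{x+z}\ge\re\varphi(x+z)=\re\varphi(x)+\re\varphi(z)>2-\eps$. Thus quasi-codirectedness transfers from $u$ to $z$ for free, precisely because it is witnessed by a single \emph{linear} functional, which is polynomial-star continuous.

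Everything therefore reduces to constructing the bidual witness $u$, and this is where I would revisit the iterated-net construction behind \cite{DG89} while feeding in Shvidkoy's Lemma \ref{lemma:DP_Shvidkoy_characterization} ``locally''. The idea is that each $\widehat{p_i}(u)$ is computed by successive weak-star limits, one variable at a time, of the associated symmetric multilinear form; since a multilinear form is weakly continuous in each single variable separately, every such step only requires weak approximation, and Shvidkoy's Lemma guarantees that the approximating points can always be chosen quasi-codirected with $x$. Selecting the witnessing functional $\varphi$ outside the finitely many ``directions'' determined by $p_1,\dots,p_n$ removes any conflict between matching the polynomial values and keeping $\re\scal{\cdot,\varphi}$ large, and weak-star lower semicontinuity of the norm lets quasi-codirectedness survive the passage to the limit element $u$.

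I expect the main obstacle to be precisely this compatibility between the polynomial matching and quasi-codirectedness inside the Davie--Gamelin machinery. Their passage from the mixed multilinear evaluations (which are what the iterated weak-star limits control) to a single diagonal value $p_i(z)=F_i(z,\dots,z)$ is effected by convex combinations, and a convex combination of quasi-codirected points need not be quasi-codirected, since distinct quasi-codirected points are in general witnessed by distinct norming functionals. The technical heart of the proof is thus to organise the construction so that a single fixed functional witnesses quasi-codirectedness throughout the diagonalisation, with Shvidkoy's Lemma supplying, at each stage and within the relevant slice, the points needed to drive the multilinear evaluations to the target values $p_i(y)$.
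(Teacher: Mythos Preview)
Your overall plan---open up the Davie--Gamelin construction and inject Shvidkoy's lemma at each inductive step---is precisely the paper's approach, and you have correctly isolated the real obstacle: the averaging in \cite{DG89} only controls $|p_j(y_\F)-p_j(y)|$ for $y_\F=\tfrac1N\sum y_i$, and $\|x+y_\F\|>2-\eps$ requires a \emph{single} functional witnessing quasi-codirectedness of all the $y_i$ simultaneously. What is missing is the mechanism that achieves this. Your suggestion to ``select the witnessing functional $\varphi$ outside the finitely many directions determined by $p_1,\dots,p_n$'' does not work: Shvidkoy's lemma gives no control over which functional witnesses $\|x+y_\alpha\|\approx 2$, so you cannot fix $\varphi$ in advance and then force each $y_i$ into the slice $\{\re\varphi>1-\eta\}$. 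And the appeal to weak-star lower semicontinuity of the norm points the wrong way: it yields $\|x+u\|\le\liminf\|x+u_\alpha\|$, not the lower bound you need. What would survive a weak-star limit is a \emph{fixed} $\varphi$ with $\re\langle x+u_\alpha,\varphi\rangle$ uniformly large---but that again presupposes the single functional you are trying to produce.

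The paper's device is to apply Shvidkoy's lemma at step $n{+}1$ not to $x$ but to the normalised partial sum $\bigl(x+\sum_{i\le n}y_i\bigr)\big/\bigl\|x+\sum_{i\le n}y_i\bigr\|$, while still approximating $y$ weakly so as to match the (linear-in-one-variable) multilinear data. Together with Remark~\ref{rem:l_1^2_identity} this builds up $\bigl\|x+\sum_{i=1}^N y_i\bigr\|>N+1-\tfrac{\eps}{N+1}$, and then a single Hahn--Banach functional $f\in S_{X^*}$ simultaneously satisfies $\re f(x)>1-\tfrac{\eps}{N+1}$ and $\re f(y_i)>1-\tfrac{\eps}{N+1}$ for every $i$, giving $\|x+y_\F\|>2-\eps$ directly. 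Since $y_\F$ already lies in $B_X$, the bidual detour is superfluous: the only route you propose for constructing the intermediate $u\in B_{X^{**}}$ is this very machinery, and it delivers the desired element of $B_X$ without passing through $X^{**}$.
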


As previously mentioned, the proof of this result essentially consists in using Shvidkoy's original $\eps$-quasi-codirected point argument from Lemma~\ref{lemma:DP_Shvidkoy_characterization} at every step of the construction of the net in the proof of the  Davie--Gamelin's Goldstine-type result for the weak-star polynomial topology given in \cite[Theorem 2]{DG89}, to force this net to be almost quasi-codirected with any previously fixed point of the unit sphere. More precisely, given $x\in S_X$, $y\in B_Y$, and a finite family $\F$ of continuous symmetric multilinear  forms on $X$, we will construct a finite sequence $y_1,\dots,y_N \in B_X$ as in the proof of \cite[Theorem 2]{DG89} whose arithmetic mean $y_\F$ will have value close to the value of $y$ under the action of every homogeneous polynomial arising from a multilinear form in $\F$ and, additionally, which will be almost quasi-codirected with the point $x$.  

We will also need the following well known immediate consequence of the
triangle inequality. 

\begin{lem}\label{lem:l_1^2_identity}
  Let $X$ be a Banach space, $\eps\in(0,1)$, and $x,y\in B_X$ be such that $\norm{x+y}> 2-\eps$. Then, for every $\lambda\in(0,1)$, we have \begin{equation*}
        \norm{x+\lambda y}> 1+\lambda-\eps.
    \end{equation*}
\end{lem}

We also need the following lemma, which we will prove after we present a proof for Theorem \ref{thm:polynomial_Shvidkoy_result}.

\begin{lem} \label{lemma-referee} Let $X$ be a Banach space with the DPr, let $x \in S_X$, let $y \in B_X$, let $\eps \in (0,1)$, let $\mathcal{F}$ be a finite family of continuous symmetric multilinear forms on $X$ and let $N \in \N$ be such that every $F \in \mathcal{F}$ is $m$-linear for some $m \leq N$. Then, we can find $y_1, \ldots, y_N \in B_X$ such that 
\begin{equation} \label{referee-lemma-1}
    \left\| x + \sum_{i=1}^N y_i \right\| > N + 1 - \frac{\eps}{N+1}
\end{equation} 
and 
\begin{equation} \label{referee-lemma-2}
    |F(y_{i_1}, \ldots, y_{i_m}) - F(y,\ldots, y)| < \eps
\end{equation}
for every $m$-linear form $F \in \mathcal{F}$ and every $i_1 < \ldots < i_m \in \{1, \ldots, N\}$.
\end{lem}

With Lemma \ref{lemma-referee} in mind, Theorem \ref{thm:polynomial_Shvidkoy_result} follows.
 
\begin{proof}[Proof of Theorem \ref{thm:polynomial_Shvidkoy_result}] Fix $x \in S_X$ and $y \in B_X$. Let $\eps \in (0,1)$ and let $\mathcal{F}$ be a finite family of continuous symmetric multilinear forms on $X$. Choose $N \in \N$ sufficiently large so that every $F \in \mathcal{F}$ is $m$-linear for some $m \leq N$ and so that the averaging in \cite[page 354]{DG89} applies. By Lemma \ref{lemma-referee}, there are $y_1, \ldots, y_N \in B_X$ satisfying (\ref{referee-lemma-1}) and (\ref{referee-lemma-2}) above. Set 
\begin{equation*}
    y_{\mathcal{F}}:= \frac{1}{N} \sum_{i=1}^N y_i.
\end{equation*}
On the one hand, it is proved in \cite[page 354]{DG89} that, for $N$ sufficiently large, (\ref{referee-lemma-2}) implies 
\begin{equation*}
    |F(y_{\mathcal{F}}, \ldots, y_{\mathcal{F}}) - F(y, \ldots, y)| < 2 \eps 
\end{equation*}
for every $F \in \mathcal{F}$. On the other hand, (\ref{referee-lemma-1}) and the Hahn-Banach theorem provide a functional $f \in S_{X^*}$ such that 
\begin{equation*}
    \re f(x) > 1 - \frac{\eps}{N+1} \ \ \ \mbox{and} \ \ \ \re f(x_i) > 1 - \frac{\eps}{N+1} 
\end{equation*}
for every $i \in \{1, \ldots, N\}$. Thus, we have that 
\begin{equation*}
    \|x + y_{\mathcal{F}}\| \geq \re f(x) + \frac{1}{N} \sum_{i=1}^N \re f(x_i) > 2 - \frac{2\eps}{N+1} \geq 2 - \eps. 
\end{equation*}
Taking as a direct set the product of $(0,1)$, ordered by reverse inequality, and the set of all finite families of continuous symmetric multilinear forms on $X$, ordered by inclusion, the preceding estimates produce a net $(y_{\alpha})$ such that $y_{\alpha}$ converges to $y$ in the weak polynomial topology and $\|x + y_{\alpha}\| \rightarrow 2$.
\end{proof} 

It remains to prove Lemma \ref{lemma-referee}.

\begin{proof}[Proof of Lemma \ref{lemma-referee}] As in \cite[Section 3]{DG89}, we select the points $y_i$, inductively. Put 
\begin{equation*}
    \xi:= \frac{\eps}{N(N+1)}.
\end{equation*}
We will construct $y_1, \ldots, y_N \in B_X$ such that, for every $n \in \{1, \ldots, N\}$,
\begin{equation}\label{referee-lemma-3}
       \norm{x+\sum_{i=1}^n y_i}> n+1- n \xi
   \end{equation} 
   and, additionally,
   \begin{equation}\label{referee-lemma-4}
       |F(y_{i_1}, \ldots, y_{i_{k-1}}, y_{i_k}, y, \ldots, y) - F(y_{i_1}, \ldots, y_{i_{k-1}}, y, y, \ldots, y)| < \frac{\eps}{N}
   \end{equation} 
for every $m$-linear symmetric form $F\in\F$, every $k \leq m$ and for every $i_1<\dots<i_k \leq n$. The case $n=1$ follows from Shvidkoy's lemma. Indeed, there is a net $(y_{\alpha})$ in $B_X$ which converges weakly to $y$ and satisfies $\|x+y_{\alpha}\| \rightarrow 2$. For every $F \in \mathcal{F}$, the mapping $z \mapsto F(z, y, \ldots, y)$ is linear and continuous, hence weakly continuous. Since $\mathcal{F}$ is finite, we can choose $\alpha$ such that $\|x+y_{\alpha}\|>2-\xi$ and 
\begin{equation*}
    |F(y_{\alpha}, y, \ldots, y) - F(y, \ldots, y)| < \frac{\eps}{N}
\end{equation*}
for every $F \in \mathcal{F}$. Then $y_1 := y_{\alpha}$ satisfies the required conditions. Now, suppose that $y_1, \ldots, y_n$ have been constructed for some $n \in \{1, \ldots, N-1\}$. Applying Shvidkoy's lemma (see once again Lemma~\ref{lemma:DP_Shvidkoy_characterization}) to the point $\frac{x+\sum_{i=1}^ny_i}{\norm{x+\sum_{i=1}^ny_i}} \in S_X$, we obtain a net $(y_\alpha)$ in $B_X$ which converges weakly to $y$ and such that \begin{equation*}
       \norm{\frac{x+\sum_{i=1}^ny_i}{\norm{x+\sum_{i=1}^ny_i}}+y_\alpha}\to 2.
   \end{equation*}
As before, we have that given any $m$-linear $F\in\F$, $k\leq m-1$ and $i_1<\dots<i_k\leq n$, the mapping $z\mapsto F(y_{i_1},\dots,y_{i_k},z,y,\dots,y)$ is linear and continuous, so \begin{equation*}
       F(y_{i_1},\dots,y_{i_k},y_\alpha,y,\dots,y)\to F(y_{i_1},\dots,y_{i_k},y,y,\dots,y).
   \end{equation*} 
   Since there is only a finite number of linear forms and finitely many possible choices for the multi-indices $i_1<\dots<i_k$ for each of these, we can find $\alpha$ such that all the required estimates in (\ref{referee-lemma-4}) hold, that is, 
   \begin{equation*}
       \bigl|F(y_{i_1},\dots,y_{i_k},y_\alpha,y,\dots,y)- F(y_{i_1},\dots,y_{i_k},y,y,\dots,y)\bigr|<\frac{\eps}{N}
   \end{equation*} 
   for every such choice, and additionally such that
   \begin{equation*}
       \norm{\frac{x+\sum_{i=1}^ny_i}{\norm{x+\sum_{i=1}^ny_i}}+y_\alpha}> 2-\frac{\xi}{\norm{x+\sum_{i=1}^ny_i}}.
   \end{equation*} 
   By using the induction hypothesis and applying Lemma~\ref{lem:l_1^2_identity} with $\lambda = \frac{1}{\norm{x+\sum_{i=1}^n y_i}} \in (0,1)$ (indeed, notice that $\norm{x+\sum_{i=1}^n y_i} > n$), we immediately get 
   \begin{equation*}
       \norm{x+\sum_{i=1}^ny_i+y_\alpha}> 1+\norm{x+\sum_{i=1}^ny_i}-\xi> n+2-(n+1)\xi.
   \end{equation*} So $y_{n+1}:=y_\alpha$ is as we wanted.
\vspace{0.2cm}
   Finally, note that, on the one hand, by (\ref{referee-lemma-3}) with $n=N$, we have 
   \begin{equation*}
       \norm{x+\sum_{i=1}^Ny_i}> N+1-N\xi=N+1-\frac{\eps}{N+1}
   \end{equation*} 
   which gives (\ref{referee-lemma-1}), and, on the other hand, given any $m$-linear symmetric form $F\in\F$ and $i_1<\dots<i_m\in\{1,\dots,N\}$, we have 
   \begin{align*}
       \bigl| F(y_{i_1},\dots,y_{i_m})-F(y,\dots,y)\bigr| &\leq \bigl| F(y_{i_1},\dots,y_{i_m})-F(y_{i_1},\dots,y_{i_{m-1}},y)\bigr| \\
       &+ \bigl|F(y_{i_1},\dots,y_{i_{m-1}},y)-F(y_{i_1},\dots,y_{i_{m-2}},y,y)\bigr| \\
       &+ \dots +\bigl|F(y_{i_1},y,\dots,y)-F(y,\dots,y)\bigr| \\
       &<m\cdot\frac{\eps}{N}\leq \eps. 
   \end{align*} This proves (\ref{referee-lemma-2}) and completes the proof.
\end{proof}

As a corollary, we immediately get that the DPr and the polynomial DPr coincide solving thus a longstanding open problem. In order to do this, we will use the characterization given in Proposition \ref{prop:char-pol-DPr}.

\begin{cor} \label{DPr-polynomialDPr}
    A Banach $X$ with the DPr satisfies the polynomial DPr.
\end{cor}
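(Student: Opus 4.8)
The plan is to derive the polynomial Daugavet property directly from Theorem~\ref{thm:polynomial_Shvidkoy_result} by verifying the geometric characterization in Proposition~\ref{prop:char-pol-DPr}. So I fix $x\in S_X$, a number $\eps>0$, and a norm-one scalar-valued polynomial $p\in\Pol(X)$; my goal is to produce $y\in B_X$ and a modulus-one scalar $\omega\in\K$ with $\re\,\omega p(y)>1-\eps$ and $\norm{x+\omega y}>2-\eps$.

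The first step is to exploit that $\norm{p}=1$ to choose a point $z\in B_X$ at which $p$ nearly attains its norm, say $\abs{p(z)}>1-\eps/2$. Writing $p(z)=\abs{p(z)}\,e^{i\theta}$, I set $\omega:=e^{-i\theta}$ (so $\abs{\omega}=1$ and $\re\,\omega p(z)=\abs{p(z)}>1-\eps/2$). The key idea is then to apply Theorem~\ref{thm:polynomial_Shvidkoy_result} with the \emph{same} point $x\in S_X$ but with the target point taken to be $z$: this yields a net $(z_\alpha)\subseteq B_X$ converging to $z$ in the relative weak polynomial topology of $B_X$ with $\norm{x+z_\alpha}\to 2$. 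Since $p\in\Pol(X)$ and the weak polynomial topology is, by its very definition, the coarsest topology making every scalar-valued polynomial continuous, we have $p(z_\alpha)\to p(z)$, and hence $\re\,\omega p(z_\alpha)\to\re\,\omega p(z)>1-\eps/2$.

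The final step is to combine these two convergences. Because $\norm{x+z_\alpha}\to 2$ and $\re\,\omega p(z_\alpha)\to\re\,\omega p(z)>1-\eps/2$, I can pick a single index $\alpha$ large enough that simultaneously $\norm{x+z_\alpha}>2-\eps$ and $\re\,\omega p(z_\alpha)>1-\eps$. Setting $y:=z_\alpha$, the pair $(y,\omega)$ then satisfies both required inequalities of Proposition~\ref{prop:char-pol-DPr}. I should be slightly careful about the scalar $\omega$ interacting with the norm condition: since $\abs{\omega}=1$, one has $\norm{x+\omega z_\alpha}$ rather than $\norm{x+z_\alpha}$ appearing in the characterization, so I should apply the theorem to the target $\omega z$ (equivalently, replace $z_\alpha$ by $\omega z_\alpha$, which still lies in $B_X$ and still converges in the weak polynomial topology since multiplication by a fixed scalar is a homeomorphism for that topology). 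With this adjustment the two conditions $\re\,\omega p(\omega^{-1}\cdot \omega z_\alpha)=\re\,\omega p(z_\alpha)>1-\eps$ and $\norm{x+\omega z_\alpha}>2-\eps$ are exactly what Proposition~\ref{prop:char-pol-DPr} demands.

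The only real obstacle is this bookkeeping with the modulus-one scalar $\omega$: one must arrange the roles of $\omega$ and the target point consistently so that the norm condition reads $\norm{x+\omega y}$ while the polynomial condition reads $\re\,\omega p(y)$, both with the same $y$. This is purely formal and dissolves once one observes that scalar multiplication by $\omega$ is a weak-polynomial homeomorphism of $B_X$; everything else is an immediate consequence of Theorem~\ref{thm:polynomial_Shvidkoy_result} together with the definition of the weak polynomial topology, so no further genuinely new idea is needed beyond the theorem already proved.
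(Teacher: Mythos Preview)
Your proof is correct and follows essentially the same route as the paper: verify Proposition~\ref{prop:char-pol-DPr} by first choosing a near-norming point for $p$ together with the appropriate modulus-one scalar $\omega$, and then invoke Theorem~\ref{thm:polynomial_Shvidkoy_result} to approximate that point in the weak polynomial topology while forcing $\eps$-quasi-codirectedness. The only cosmetic difference lies in the bookkeeping for $\omega$: the paper applies the theorem to the rotated sphere point $\overline{\omega}\,x\in S_X$ and the original target (so that $\norm{\overline{\omega}\,x+y_\alpha}=\norm{x+\omega y_\alpha}$ directly), whereas you rotate the target point instead---both adjustments are trivially equivalent via the observation you make that multiplication by a unimodular scalar is a weak-polynomial homeomorphism of $B_X$.
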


\begin{proof}
    Let $x\in S_X$, $p\in\Pol(X)$ with $\norm{p}=1$ and $\eps>0$. Pick $y\in B_X$ and a modulus-one scalar $\omega\in\K$ to be such that $\real\omega p(y)>1-\eps$. Applying Theorem~\ref{thm:polynomial_Shvidkoy_result} to the points $\overline{\omega}\, x$ and $y$, we can find a net $(y_\alpha)$ in $B_X$ which converges to $y$ in the weak polynomial topology and such that $\norm{x+\omega y_\alpha}\to 2$. Since, in particular, $p(y_\alpha)\to p(y)$, we can find $\alpha$ such that $\real\omega p(y_\alpha)>1-\eps$ and $\norm{x+\omega y_\alpha}>2-\eps$.
\end{proof}

\begin{rem}\slshape
    In particular, this result shows that for a \textit{real} Banach space with the DPr we have that $\norm{p}=\sup_{x\in S_X}\abs{p(x)}$ for every $p\in\Pol(X)$. In the complex case, this actually happens in all Banach spaces by the Maximum Modulus Principle (see, for instance, \cite[5.G, page 40]{Mujica86}) but, in the real case, it is not always the case. Indeed, just consider the polynomial $x\mapsto 1-\scal{x,x}$ defined on a Hilbert space.
\end{rem}

\subsection{Every Daugavet center is a polynomial Daugavet center} \label{sec:Daugavetcenter}
Let us now deal with an extension of the Daugavet property, the Daugavet centers introduced in \cite{BK10}.

Let $X$, $Y$ be Banach spaces. A (non-null) operator $G\in \mathcal{L}(X,Y)$ is called a \emph{Daugavet center} \cite{BK10} if the equation 
\begin{equation*}
    \norm{G+T}=\norm{G}+\norm{T}
\end{equation*} 
holds for every rank-one $T\in\mathcal{L}(X,Y)$. In this case, the same norm-equality is satisfied by all bounded linear operators not fixing a copy of $\ell_1$ (in particular, by all weakly compact operators) \cite[Corollary 3]{BK10}. Following \cite{Santos20}, we say that a (non-null) polynomial $Q\in \Pol(X,Y)$ is a \emph{polynomial Daugavet center} if the equation
\begin{equation*}
    \norm{Q+P}=\norm{Q}+\norm{P}
\end{equation*} 
holds for every rank-one polynomial $P\in\Pol(X,Y)$. In this case, all weakly compact $P\in\Pol(X,Y)$ also satisfy the same norm equality (see \cite[Corollary 2.9]{Santos20}). Note that a Banach space $X$ has the DPr (respectively, polynomial DPr) if and only if the identity $\Id$ is a Daugavet (respectively, polynomial Daugavet) center. Also, by homogeneity, $G$ is a Daugavet center (respectively, $Q$ is a polynomial Daugavet center) if and only if  $\frac{G}{\norm{G}} \in S_{\mathcal{L}(X,Y)}$ is a Daugavet center (respectively, $\frac{Q}{\norm{Q}} \in S_{\mathcal{P}(X,Y)}$ is a polynomial Daugavet center). Thus, when dealing with these properties, it is enough to work with norm-one operators or polynomials. For these notions, we have the following geometric characterizations.

\begin{prop}\label{prop:characterizations-DC-PolDC}
    Let $X$, $Y$ be Banach spaces, let $G\in\mathcal{L}(X,Y)$ with $\|G\|=1$, and let $Q\in \Pol(X,Y)$ with $\|Q\|=1$. Then we have the following statements.\parskip=0pt
    \begin{enumerate}
        \item[\textup{(a)}] \cite[Theorem~2.1]{BK10} The operator $G$ is a Daugavet center if and only if it satisfies the following condition: for every $y\in S_Y$ and every slice $S$ of $B_X$, we have that $\sup_{x\in S}\norm{y+G(x)}=2$.
        \item[\textup{(b)}] \cite[Theorem~2.2]{Santos20} The polynomial $Q$ is a polynomial Daugavet center if and only if it satisfies the following condition: for every $y\in S_Y$, $\eps>0$, and $p\in \Pol(X)$ with $\|p\| = 1$, there exists $x\in B_X$ and a modulus-one scalar $\omega\in\K$ such that $\real \omega p(x)>1-\eps$ and $\norm{y+\omega Q(x)}>2-\eps$.
    \end{enumerate}
\end{prop}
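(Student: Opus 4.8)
The plan is to treat both statements as the Daugavet-center (resp.\ polynomial Daugavet-center) analogues of the geometric characterizations of the DPr recorded in Proposition~\ref{prop:DP_slice_characterization} (resp.\ Proposition~\ref{prop:char-pol-DPr}), and to prove each equivalence by matching a defining rank-one object with a point of the unit ball selected by the geometric condition. For (a) the relevant rank-one operators are those of the form $T(x)=x^*(x)\,y_0$ with $x^*\in X^*$ and $y_0\in Y$, whose norm is $\norm{x^*}\norm{y_0}$; for (b) the relevant rank-one polynomials are $P(x)=p(x)\,y_0$ with $p\in\Pol(X)$ and $y_0\in Y$, of norm $\norm{p}\norm{y_0}$. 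In both cases the slice $\{x\in B_X\colon \re x^*(x)>1-\alpha\}$ of the linear picture is replaced in the polynomial picture by the sublevel set $\{x\in B_X\colon \re\omega p(x)>1-\eps\}$, which is exactly why a modulus-one scalar $\omega\in\K$ must appear. I expect the two directions to run in parallel and to use nothing beyond the definitions and the elementary convexity estimate of Remark~\ref{rem:l_1^2_identity}.

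For the forward implication I would start from the defining equation. In (a), if $G$ is a Daugavet center, then given $y\in S_Y$ and a slice $S$ I would test the center equation on the norm-one rank-one operator $T(x)=x^*(x)y$, obtaining $\norm{G+T}=2$; choosing $x_n\in B_X$ with $\norm{G(x_n)+x^*(x_n)y}\to2$ forces both $\norm{G(x_n)}\to1$ and $\abs{x^*(x_n)}\to1$, after which a phase correction $x_n\mapsto\omega_n x_n$ with $\omega_n x^*(x_n)=\abs{x^*(x_n)}$ puts the point in $S$ and yields $\norm{y+G(\omega_n x_n)}\to2$. In (b) I would test the polynomial center equation on $P(x)=p(x)y$ instead, reaching $\norm{Q+P}=2$, and extract $x_n\in B_X$ with $\norm{Q(x_n)+p(x_n)y}\to2$; the same reasoning gives $\abs{p(x_n)}\to1$ and, after multiplying through by a modulus-one $\omega_n$ with $\omega_n p(x_n)=\abs{p(x_n)}$, both $\re\omega_n p(x_n)\to1$ and $\norm{y+\omega_n Q(x_n)}\to2$, which is the desired conclusion since the scalar stays outside $Q$.

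For the converse I would prove only the non-trivial inequality $\norm{G+T}\ge\norm{G}+\norm{T}$ (resp.\ $\norm{Q+P}\ge\norm{Q}+\norm{P}$). Writing a general rank-one $T(x)=x^*(x)y_0$ with $\norm{x^*}=1$ and $\norm{y_0}=t=\norm{T}$, and setting $y=y_0/t\in S_Y$, I would feed $y$ and a suitable slice into the hypothesis of (a) to get $x$ in the slice with $\norm{y+G(x)}>2-\delta$; then $\re x^*(x)>1-\alpha$ makes $x^*(x)y_0$ close to $ty$, so $\norm{(G+T)(x)}$ is close to $\norm{G(x)+ty}$, and Remark~\ref{rem:l_1^2_identity} (applied to $y$ if $t\le1$, or to $(1/t)G(x)$ after factoring out $t$ if $t>1$) bounds this below by $1+t$ up to an error that vanishes with $\alpha,\delta$. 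The argument for (b) is identical after replacing the slice by the sublevel set and $x^*(x)$ by $\omega p(x)$: the hypothesis supplies $x\in B_X$ and a modulus-one $\omega$ with $\re\omega p(x)>1-\eps$ and $\norm{y+\omega Q(x)}>2-\eps$, and multiplying $(Q+P)(x)$ by $\omega$ turns $\omega p(x)y_0$ into something close to $ty$, so the same convexity estimate applies to $\norm{\omega Q(x)+ty}$.

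The main obstacle, and the only place that genuinely differs from the linear DPr characterization, is the bookkeeping of the modulus-one scalar $\omega$ in part (b): because $Q(\omega x)\ne\omega Q(x)$ for a non-homogeneous polynomial, the phase cannot be absorbed into the argument of $Q$ and must be carried as an external factor throughout, both when extracting it from the norming sequence in the forward direction and when multiplying $(Q+P)(x)$ by it in the converse. A secondary technical point is the reduction to the normalized convexity estimate when $\norm{T}\ne1$, handled by the case split $t\le1$ versus $t>1$ above and by letting the approximation parameters tend to zero at the end; since every estimate is uniform in these parameters, this introduces no real difficulty.
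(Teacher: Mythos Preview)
The paper does not give its own proof of this proposition: both parts are simply quoted from the literature (part~(a) from \cite[Theorem~2.1]{BK10} and part~(b) from \cite[Theorem~2.2]{Santos20}), so there is nothing to compare your argument against here. Your sketch is correct and is precisely the standard route taken in those references: test the center equation on the rank-one object $x^*(\cdot)\,y$ (resp.\ $p(\cdot)\,y$) to obtain the forward implication, and for the converse rebuild a near-maximizer of $\norm{G+T}$ (resp.\ $\norm{Q+P}$) from the point supplied by the geometric condition, using the convexity estimate of Remark~\ref{rem:l_1^2_identity} to handle $\norm{T}\neq 1$. Your treatment of the modulus-one scalar $\omega$ in~(b)---kept as an external factor because $Q(\omega x)\neq\omega Q(x)$---is exactly the right bookkeeping.
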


The following Shvidkoy-type lemma for Daugavet center was proved in \cite{Bosenko10}. 

\begin{lem}[\mbox{\cite[Lemma~3]{Bosenko10}}]\label{lemma:Bosenko}
    Let $X$ be a Banach space and $G\in \mathcal{L}(X,Y)$ with $\|G\|=1$ be a Daugavet center. Then for every $x\in B_X$ and $y\in S_Y$, there exists a net $(x_\alpha)$ in $B_X$ such that $x_\alpha\to x$ weakly and $\norm{y+G(x_\alpha)}\to 2$.
\end{lem}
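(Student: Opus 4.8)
The plan is to obtain the Daugavet-center analogue of Shvidkoy's lemma exactly as Shvidkoy's original result is proved, namely by reducing weak density to the behaviour of $G$ on convex combinations of slices of $B_X$. It suffices to prove that for every $\eps>0$ the set $A_\eps:=\{z\in B_X\colon \norm{y+G(z)}>2-\eps\}$ is dense in $B_X$ for the weak topology; once this is known, indexing by the directed set of pairs $(W,\eps)$, where $W$ ranges over the relatively weakly open neighbourhoods of $x$ in $B_X$ and $\eps>0$ (ordered by reverse inclusion and decreasing $\eps$), and selecting $x_{(W,\eps)}\in A_\eps\cap W$, yields a net converging weakly to $x$ with $\norm{y+G(x_{(W,\eps)})}\to 2$. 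To prove the density of $A_\eps$, I would show that $A_\eps$ meets every non-empty relatively weakly open subset $W$ of $B_X$. By Bourgain's lemma, $W$ contains a convex combination of slices, i.e.\ there are slices $S_1,\dots,S_n$ of $B_X$ and weights $\lambda_1,\dots,\lambda_n>0$ with $\sum_{i=1}^n\lambda_i=1$ such that $\sum_{i=1}^n\lambda_i S_i\subseteq W$. Thus everything reduces to producing points $z_i\in S_i$ with $\norm{y+G\bigl(\sum_{i=1}^n\lambda_i z_i\bigr)}>2-\eps$, since then $z:=\sum_{i=1}^n\lambda_i z_i$ lies in $A_\eps\cap W$.

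The heart of the argument is this last selection, which I would carry out by an inductive normalisation using the slice characterization of Daugavet centers from Proposition~\ref{prop:characterizations-DC-PolDC}(a). Set $v_0:=y\in S_Y$ and fix $\delta>0$ with $\delta<\eps/2$. Assuming $z_1,\dots,z_{k-1}$ have been chosen, write $v_{k-1}:=y+\sum_{i<k}\lambda_i G(z_i)$; the lower bounds below give inductively $\norm{v_{k-1}}\ge 1-2\delta>0$, so the normalisation $w_{k-1}:=v_{k-1}/\norm{v_{k-1}}\in S_Y$ is legitimate. Applying Proposition~\ref{prop:characterizations-DC-PolDC}(a) to $w_{k-1}$ and the slice $S_k$ gives $\sup_{u\in S_k}\norm{w_{k-1}+G(u)}=2$, so we may pick $z_k\in S_k$ with $\norm{w_{k-1}+G(z_k)}>2-\tfrac{\delta}{n}$. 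A Hahn--Banach argument as in the proof of Theorem~\ref{thm:polynomial_Shvidkoy_result} then produces $g\in S_{Y^*}$ with $\re g(w_{k-1})>1-\tfrac{\delta}{n}$ and $\re g(G(z_k))>1-\tfrac{\delta}{n}$, whence
\begin{equation*}
  \norm{v_k}=\norm{v_{k-1}+\lambda_k G(z_k)}\ge \norm{v_{k-1}}\,\re g(w_{k-1})+\lambda_k\,\re g(G(z_k))>\bigl(\norm{v_{k-1}}+\lambda_k\bigr)\Bigl(1-\tfrac{\delta}{n}\Bigr).
\end{equation*}
Since $\norm{v_{k-1}}+\lambda_k\le 2$, this yields $\norm{v_k}>\norm{v_{k-1}}+\lambda_k-\tfrac{2\delta}{n}$, and summing over $k=1,\dots,n$ gives $\norm{y+G(z)}=\norm{v_n}>1+\sum_{i=1}^n\lambda_i-2\delta=2-2\delta>2-\eps$, as required. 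This is precisely the convexity mechanism of Remark~\ref{rem:l_1^2_identity}, applied here in $Y$ along the renormalised partial sums $v_k$.

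I expect the only genuine obstacle to be the reduction to convex combinations of slices, i.e.\ the use of Bourgain's lemma, which is the step that transfers information from the slice characterization (a statement about a single slice) to arbitrary weak neighbourhoods. Once this geometric input is in place, the selection of the $z_i$ is a routine inductive normalisation combined with the standard Hahn--Banach/convexity estimate, and the passage from weak density of $A_\eps$ to the desired net is immediate.
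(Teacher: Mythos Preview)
Your argument is correct and follows exactly the Shvidkoy--Bourgain route: reduce weak density to convex combinations of slices via Bourgain's lemma, then select points slice by slice using the characterization in Proposition~\ref{prop:characterizations-DC-PolDC}(a), with the normalisation and Hahn--Banach estimate controlling the accumulated error (your telescoping inequality $\norm{v_k}>\norm{v_{k-1}}+\lambda_k-2\delta/n$ is fine, and the lower bound $\norm{v_{k-1}}>1-2\delta$ needed for the normalisation indeed follows inductively provided $\delta<1/2$). The paper itself does not supply a proof of this lemma---it is quoted verbatim from \cite{Bosenko10}---but your approach is precisely the one used there, namely the straightforward adaptation of Shvidkoy's original argument for Lemma~\ref{lemma:DP_Shvidkoy_characterization} to the Daugavet-center setting.
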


The following result is presented with a sketch of the proof. We leave the full details to the reader, indicating only the essential adjustments to the proof of Theorem~\ref{thm:polynomial_Shvidkoy_result}.

\begin{thm}\label{theorem:Shvidkoy-for-Daugavetcenters}
    Let $X$ be a Banach space and $G\in \mathcal{L}(X,Y)$ be a Daugavet center. Then for every $x\in B_X$ and $y\in S_Y$, there exists a net $(x_\alpha)$ in $B_X$ such that $x_\alpha\to x$ in the weak polynomial topology and $\norm{y + G(x_\alpha)}\to 2$.
\end{thm}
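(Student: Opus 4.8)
The plan is to follow the proof of Theorem~\ref{thm:polynomial_Shvidkoy_result} essentially line by line, making only the structural changes forced by the fact that a Daugavet center sends the domain $X$ into a possibly different codomain $Y$. The first and most important adjustment is to replace every invocation of Shvidkoy's Lemma~\ref{lemma:DP_Shvidkoy_characterization} by an invocation of Bosenko's Lemma~\ref{lemma:Bosenko}. The conceptual reason this works is that the two ingredients of the construction now live in different spaces and therefore do not interfere: the weak polynomial approximation is a statement about the domain $X$ (the net lives in $B_X$ and must converge to $x$ for the weak polynomial topology of $X$, which is governed by the symmetric multilinear forms on $X$), whereas the almost-quasi-codirectedness is a statement about the codomain $Y$, concerning the norm $\norm{y+G(x_\alpha)}$ with $y\in S_Y$. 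Bosenko's lemma delivers both at once, since the net it produces converges weakly in $X$ while simultaneously controlling the relevant $Y$-norm.

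By the Davie--Gamelin reduction used in Theorem~\ref{thm:polynomial_Shvidkoy_result}, it suffices, given $\eps\in(0,1)$, a finite family $\F$ of continuous symmetric multilinear forms on $X$, and $N$ large enough that each $F\in\F$ is $m$-linear for some $m\leq N$, to produce $x_1,\dots,x_N\in B_X$ satisfying the multilinear approximation estimate \eqref{eqt:multilinear_approximation} verbatim (with the $x_i$ in place of the $y_i$ and $x$ in place of $y$) together with the $Y$-valued almost-$\ell_1$ estimate
\[
\norm{y+\sum_{i=1}^N G(x_i)}>N+1-\frac{\eps}{N+1}.
\]
Setting $x_\F:=\frac{1}{N}\sum_{i=1}^N x_i$, the multilinear estimate forces $x_\F\to x$ in the weak polynomial topology exactly as before, while the almost-$\ell_1$ estimate, combined with Hahn--Banach applied now in $Y^*$, yields $f\in S_{Y^*}$ with $\re f(y)>1-\frac{\eps}{N+1}$ and $\re f(G(x_i))>1-\frac{\eps}{N+1}$ for each $i$. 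Since $G$ is linear it commutes with the arithmetic mean, so $G(x_\F)=\frac{1}{N}\sum_{i=1}^N G(x_i)$ and hence
\[
\norm{y+G(x_\F)}\geq \re f(y)+\frac{1}{N}\sum_{i=1}^N \re f(G(x_i))>2-\eps.
\]
Directing by the product of $(0,1)$ with the finite families of symmetric multilinear forms on $X$ then produces the desired net.

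The points $x_1,\dots,x_N$ are selected inductively, just as in Theorem~\ref{thm:polynomial_Shvidkoy_result}, but with the renormalization carried out in $Y$. With $\xi:=\frac{\eps}{N(N+1)}$, the base case applies Bosenko's lemma to the pair $(x,y)\in B_X\times S_Y$. For the inductive step, having constructed $x_1,\dots,x_n$ with $\norm{y+\sum_{i=1}^n G(x_i)}>n+1-n\xi$, I would apply Bosenko's lemma to $x\in B_X$ together with the renormalized unit vector $\frac{y+\sum_{i=1}^n G(x_i)}{\norm{y+\sum_{i=1}^n G(x_i)}}\in S_Y$. The maps $z\mapsto F(x_{i_1},\dots,x_{i_k},z,x,\dots,x)$ are linear and continuous, hence weakly continuous, so a single index $\alpha$ can be chosen meeting the finitely many approximation requirements and the norm requirement simultaneously, and Remark~\ref{rem:l_1^2_identity} applied with $\lambda=\norm{y+\sum_{i=1}^n G(x_i)}^{-1}\in(0,1)$ (legitimate since the inductive hypothesis gives $\norm{y+\sum_{i=1}^n G(x_i)}>n$) upgrades the estimate to $\norm{y+\sum_{i=1}^{n+1} G(x_i)}>n+2-(n+1)\xi$, closing the induction.

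I do not anticipate a genuine obstacle here, which is precisely why the theorem can be stated with a sketch: the linearity of $G$ lets it commute with the arithmetic mean and transports the Hahn--Banach separation from $X$ to $Y$ without loss, and Bosenko's Lemma~\ref{lemma:Bosenko} is the exact Daugavet-center analogue of Shvidkoy's lemma, so every invocation transfers mechanically. The only points requiring care are bookkeeping ones: keeping straight that the multilinear approximation conditions are imposed in the domain $X$ while the codirectedness and the renormalization are performed in the codomain $Y$, and verifying at each step that the inductive hypothesis keeps $\lambda$ strictly below $1$, so that Remark~\ref{rem:l_1^2_identity} genuinely applies.
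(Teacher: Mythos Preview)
Your proposal is correct and follows essentially the same approach as the paper's proof: both reduce via the Davie--Gamelin argument to an inductive construction of $x_1,\dots,x_N$, invoke Bosenko's Lemma~\ref{lemma:Bosenko} in place of Shvidkoy's lemma at each step, and exploit the linearity of $G$ (together with Remark~\ref{rem:l_1^2_identity} applied in $Y$) to propagate the $\eps$-quasi-codirected estimate. The only minor point you leave implicit is the normalization $\norm{G}=1$, which the paper states at the outset and which is needed for the Hahn--Banach separation step to give $\re f(G(x_i))>1-\frac{\eps}{N+1}$.
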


\begin{proof} The proof mirrors that of Theorem \ref{thm:polynomial_Shvidkoy_result} with the only difference being the treatment of the $\eps$-quasi-codirected argument. By homogeneity, we may assume that $\|G\|=1$. Fix $x \in B_X$ and $y \in S_Y$. Given $\eps \in (0,1)$, a finite family $\mathcal{F}$ of continuous symmetric multilinear forms on $X$ and $N \in \N$ sufficiently large so that every $F \in \mathcal{F}$ is $m$-linear for some $m \leq N$, we need to construct $x_1, \ldots, x_N \in B_X$ such that 
\begin{equation*}
    \left\| y + \sum_{i=1}^N G(x_i) \right\|_Y > N + 1 - \frac{\eps}{N+1}
\end{equation*}
and 
\begin{equation*}
    |F(x_{i_1}, \ldots, x_{i_m}) - F(x, \ldots, x)| < \eps 
\end{equation*}
for every $m$-linear $F \in \mathcal{F}$ and every $i_1 < \ldots < i_m \in \{1, \ldots, N\}$. We then use 
\begin{equation*}
    x_{\mathcal{F}}:= \frac{1}{N} \sum_{i=1}^N x_i
\end{equation*}
and conclude the proof as in the proof of Theorem \ref{thm:polynomial_Shvidkoy_result}. For the construction, put 
\begin{equation*}
    \xi := \frac{\eps}{N(N+1)}
\end{equation*}
and proceed by induction. The case $n=1$ is obtained exactly as in the proof of Lemma \ref{lemma-referee}. Suppose now that $x_1, \ldots, x_n$ have already been chosen and that 
\begin{equation*}
    \left\| y + \sum_{i=1}^n G(x_i) \right\|_Y > n + 1 - n \xi.
\end{equation*}
Lemma \ref{lemma:Bosenko} applied to $\frac{y+\sum_{i=1}^n G(x_i)}{\|y+\sum_{i=1}^n G(x_i)\|} \in S_Y$ and to the point $x \in B_X$ allows us to choose $x_{n+1} \in B_X$ satisfying the required multilinear estimates and also 
\begin{equation*}
    \left\| \frac{y+\sum_{i=1}^n G(x_i)}{\|y+\sum_{i=1}^n G(x_i)\|} + G(x_{n+1}) \right\|_Y > 2 - \frac{\xi}{\| y + \sum_{i=1}^n G(x_i)\|}.
\end{equation*}
We apply Lemma \ref{lem:l_1^2_identity} for $\lambda = \frac{1}{\|y + \sum_{i=1}^n G(x_i)\|} \in (0,1)$. It follows that 
\begin{equation*}
    \left\| y + \sum_{i=1}^{n+1} G(x_i) \right\|_Y > n+2 - (n+1)\xi.
\end{equation*}
This is the desired inductive inequality. The multilinear estimates are obtained simultaneously from weak continuity, exactly as in the proof of Lemma \ref{lemma-referee}. The rest of the proof follows as in Theorem \ref{thm:polynomial_Shvidkoy_result} using the linearity of $G$.
\end{proof}

Combining Theorem \ref{theorem:Shvidkoy-for-Daugavetcenters} with Proposition \ref{prop:characterizations-DC-PolDC}.(b), and applying the same reasoning as in the proof of Corollary \ref{DPr-polynomialDPr}, we obtain the following result for any linear Daugavet center $G \in \mathcal{L}(X,Y)$.

\begin{cor}\label{Corollary-2.9}
    Every (linear) Daugavet center is a polynomial Daugavet center.
\end{cor}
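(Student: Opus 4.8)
The plan is to verify the geometric characterization of polynomial Daugavet centers from Proposition~\ref{prop:characterizations-DC-PolDC}.(b) for the operator $G$ itself (viewed as a $1$-homogeneous polynomial), following exactly the scheme of the proof of Corollary~\ref{DPr-polynomialDPr} but with Theorem~\ref{theorem:Shvidkoy-for-Daugavetcenters} in place of Theorem~\ref{thm:polynomial_Shvidkoy_result}. By the normalization remark recalled before Proposition~\ref{prop:characterizations-DC-PolDC}, there is no loss of generality in assuming $\|G\|=1$.

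So fix $y\in S_Y$, a polynomial $p\in\Pol(X)$ with $\|p\|=1$, and $\eps>0$. First I would use $\|p\|=1$ to select $x_0\in B_X$ together with a modulus-one scalar $\omega\in\K$ satisfying $\real\,\omega p(x_0)>1-\eps$. The goal is then to perturb $x_0$ within $B_X$, keeping $\real\,\omega p$ close to its value at $x_0$, so as to make the pair quasi-codirected under $G$, i.e.\ so that $\norm{y+\omega G(x)}>2-\eps$.

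The only point requiring care---and the mild obstacle compared with a literal copy of Corollary~\ref{DPr-polynomialDPr}---is the placement of the scalar $\omega$: Theorem~\ref{theorem:Shvidkoy-for-Daugavetcenters} produces nets making $\norm{y+G(\cdot)}$ tend to $2$, whereas the characterization demands the factor $\omega$ in front of $G$. Here I would exploit the \emph{linearity} of $G$ to absorb it, writing $\omega G(x)=G(\omega x)$, and apply Theorem~\ref{theorem:Shvidkoy-for-Daugavetcenters} to the point $\omega x_0\in B_X$ and to $y\in S_Y$. This yields a net $(w_\alpha)$ in $B_X$ with $w_\alpha\to\omega x_0$ in the weak polynomial topology and $\norm{y+G(w_\alpha)}\to 2$. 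Setting $x_\alpha:=\overline{\omega}\,w_\alpha$ gives $\omega x_\alpha=w_\alpha$, hence $\norm{y+\omega G(x_\alpha)}=\norm{y+G(w_\alpha)}\to 2$; and since scalar multiplication is continuous for the weak polynomial topology (if $q(w_\alpha)\to q(\omega x_0)$ for all $q\in\Pol(X)$, then the same holds after the harmless reparametrization $x\mapsto\overline{\omega}x$, because $q(\overline{\omega}\,\cdot)$ is again a polynomial), we get $x_\alpha\to x_0$ in the weak polynomial topology.

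Finally, because $p$ is in particular weak-polynomial continuous, $p(x_\alpha)\to p(x_0)$, so $\real\,\omega p(x_\alpha)\to\real\,\omega p(x_0)>1-\eps$. Choosing $\alpha$ large enough that simultaneously $\real\,\omega p(x_\alpha)>1-\eps$ and $\norm{y+\omega G(x_\alpha)}>2-\eps$, the element $x:=x_\alpha$ together with the scalar $\omega$ witnesses the condition in Proposition~\ref{prop:characterizations-DC-PolDC}.(b), and that proposition then yields that $G$ is a polynomial Daugavet center. I expect the whole argument to be routine once Theorem~\ref{theorem:Shvidkoy-for-Daugavetcenters} is granted: the substantive content is entirely contained in that theorem, and this corollary merely repackages it through the geometric characterization.
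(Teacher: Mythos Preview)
Your proof is correct and follows exactly the route the paper indicates: combine Theorem~\ref{theorem:Shvidkoy-for-Daugavetcenters} with Proposition~\ref{prop:characterizations-DC-PolDC}.(b) via the same reasoning as in Corollary~\ref{DPr-polynomialDPr}. One cosmetic simplification: rather than applying the theorem to $\omega x_0$ and then undoing the rotation on the net (which forces you to justify the weak-polynomial continuity of scalar multiplication), you can apply it directly to $x_0\in B_X$ and $\overline{\omega}\,y\in S_Y$, obtaining $(x_\alpha)\to x_0$ in the weak polynomial topology with $\|\overline{\omega}\,y+G(x_\alpha)\|\to 2$, hence $\|y+\omega G(x_\alpha)\|\to 2$---this is the literal analogue of how $\overline{\omega}$ is placed in the proof of Corollary~\ref{DPr-polynomialDPr}.
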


Let us observe that the linearity of the operator $G$ is crucial for adapting the proof of Theorem~\ref{thm:polynomial_Shvidkoy_result} to Daugavet centers (more precisely, to make the $\eps$-quasi-codirected point argument to work), and we do not know if our proof can be adapted in the non-linear setting. In particular, we do not know if, given a (non-null) polynomial $Q\in P(X,Y)$ which satisfies \begin{equation*}
    \norm{Q+T}=\norm{Q}+\norm{T}
\end{equation*} for every rank-one $T\in\mathcal{L}(X,Y)$, we actually have that $Q$ is a polynomial Daugavet center, that is, the same norm equality holds for $T$ being a rank-one polynomial from $X$ to $Y$. 

\subsection{The WODP and the polynomial WODP are equivalent}\label{sec:polynomial_WODP}
In this section, we address two properties introduced in \cite{MRZ22}, namely the WODP and the polynomial WODP, which were proposed to study the preservation of the Daugavet property under projective tensor products and projective symmetric tensor products. We need some additional notation. Given a Banach space $X$, $x_1,\dots,x_n\in B_X$, $\eps>0$, and $x'\in B_X$, we denote by $\OF(x_1,\dots,x_n;x',\eps)$ the set of all points $x\in B_X$ for which there exists $T\in\mathcal{L}(X,X)$ such that $\norm{T}\leq 1$, $\norm{T(x_i)-x_i}<\eps$ for every $i\in\{1,\dots,n\}$, and $\norm{T(x)-x'}<\eps$. 

\begin{defn}[\mbox{\cite{MRZ22}}]
    Let $X$ be a Banach space. We say that \parskip=0pt
    \begin{enumerate}
        \item[\textup{(a)}] $X$ has the \emph{weak operator Daugavet property} (\emph{WODP}, for short) if for every $x_1,\dots,x_n\in B_X$, $\eps>0$, $x'\in B_X$, and every slice $S$ of $B_X$, we have that $S\cap \OF(x_1,\dots,x_n;x',\eps)\neq\emptyset$;
        \item[\textup{(b)}] $X$ has the \emph{polynomial weak operator Daugavet property} (\emph{polynomial WODP}, for short) if for every $x_1,\dots,x_n\in B_X$, $\eps,\delta>0$, $x'\in B_X$, and $p\in\Pol(X)$ with $\|p\|=1$, we can find $y\in \OF(x_1,\dots,x_n;x',\eps)$ and a modulus-one scalar $\omega\in\K$ such that $\real \omega p(y)>1-\delta$. 
    \end{enumerate}
\end{defn}

\begin{rem}\slshape Let us notice that our definition for the set $\OF(x_1,\dots,x_n;x',\eps)$ differs slightly from the one in \cite{MRZ22}, but it is readily checked that the properties obtained in the above way are exactly the WODP and the polynomial WODP as defined in \cite{MRZ22}. See also \cite[page 6, immediately before Lemma 2.2]{RZ24} .
\end{rem}

The interest in the WODP lies in the fact that it is stable by taking projective tensor products \cite[Theorem~5.4]{MRZ22}, and that it has other interesting applications for the geometry of injective and projective tensor products \cite{RZ24}.
The analogue to Shvidkoy's result in this setting was proved in \cite[Lemma~5.6]{MRZ22}.

\begin{lem}[\mbox{\cite[Lemma~5.6]{MRZ22}}]\label{lemma:WODP_Shvidkoy}
    Let $X$ be a Banach space with the WODP. Then, for every $x_1,\dots,x_n\in B_X$, $\eps>0$, and $x'\in B_X$,  the set $\OF(x_1,\dots,x_n;x',\eps)$ is weakly dense in $B_X$.
\end{lem}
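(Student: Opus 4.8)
The plan is to run the Bourgain--Shvidkoy scheme adapted to the operator set $\OF$. To prove that $\OF(x_1,\dots,x_n;x',\eps)$ is weakly dense in $B_X$, I would show that it meets every nonempty relatively weakly open subset $U$ of $B_X$. By Bourgain's lemma on convex combinations of slices, every such $U$ contains a convex combination of slices $C=\sum_{j=1}^{k}\lambda_j S_j$ with $\lambda_j>0$, $\sum_j\lambda_j=1$, and each $S_j$ a slice of $B_X$. Hence the task reduces to exhibiting a single point $z\in C$ that lies in $\OF(x_1,\dots,x_n;x',\eps)$; such a $z$ automatically lies in $U$, which finishes the argument.

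The natural candidate is $z=\sum_{j=1}^{k}\lambda_j z_j$ with $z_j\in S_j$, and by definition $z\in\OF(x_1,\dots,x_n;x',\eps)$ requires a \emph{single} operator $T$ with $\norm{T}\le1$, $\norm{T(x_i)-x_i}<\eps$ for all $i$, and $\norm{T(z)-x'}<\eps$. Since $T(z)=\sum_j\lambda_j T(z_j)$, it suffices to force $T(z_j)$ close to $x'$ for every $j$ simultaneously while keeping $T(x_i)$ close to $x_i$. This simultaneity is exactly the difficulty, and I expect it to be the main obstacle: the WODP only supplies, slice by slice, an operator that sends \emph{one} chosen test point near $x'$, and a priori this operator mistreats the points selected from the other slices.

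To resolve this I would build $z_1,\dots,z_k$ together with the operator inductively, by composition, using an \emph{enlarged} anchor set. Maintain an operator $Q_r$ with $\norm{Q_r}\le1$ that approximately fixes $\{x_1,\dots,x_n,x'\}$ and sends each of $z_1,\dots,z_r$ near $x'$, starting from $Q_0=\Id$. At step $r+1$, apply the WODP to the slice $S_{r+1}$ with the anchor family $\{x_1,\dots,x_n,x',z_1,\dots,z_r\}$ (all points of $B_X$) and target $x'$; this yields $z_{r+1}\in S_{r+1}$ and an operator $R_{r+1}$ of norm at most $1$ that approximately fixes these anchors and sends $z_{r+1}$ near $x'$. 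Setting $Q_{r+1}=Q_r\circ R_{r+1}$ keeps the norm at most $1$, and, using that $R_{r+1}$ approximately fixes $x'$ and each $z_j$ with $j\le r$ while $Q_r$ already sends those $z_j$ near $x'$ and fixes $x'$, one checks routinely that $Q_{r+1}$ approximately fixes the $x_i$ and $x'$ and sends $z_1,\dots,z_{r+1}$ near $x'$, with the tolerances merely adding at each step. Choosing the tolerance below $\eps/(k+1)$ at each of the $k$ steps keeps the total error under $\eps$.

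After $k$ steps, $Q:=Q_k$ has norm at most $1$, sends each $x_i$ within $\eps$ of itself, and sends each $z_j$ within $\eps$ of $x'$; consequently $\norm{Q(z)-x'}\le\sum_j\lambda_j\norm{Q(z_j)-x'}<\eps$, so that $z\in\OF(x_1,\dots,x_n;x',\eps)\cap U$, as required. The overall structure parallels the way Shvidkoy's Lemma upgrades the slice condition of the Daugavet property to weak density; the only genuinely new ingredient is the composition device with the augmented anchor set, which converts the one-point-at-a-time output of the WODP into a single operator valid for the entire convex combination of slices.
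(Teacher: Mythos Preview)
Your proposal is correct. The paper does not actually prove this lemma here---it is quoted verbatim from \cite[Lemma~5.6]{MRZ22}---so there is no in-paper proof to compare against directly. That said, your argument is exactly the one behind the cited result: reduce to convex combinations of slices via Bourgain's lemma, then build the required single operator by composing the operators furnished by the WODP at each step, enlarging the anchor set to include $x'$ and the previously chosen $z_j$'s so that the composition simultaneously almost fixes the $x_i$'s and sends every $z_j$ close to $x'$. The same composition device (with the same bookkeeping $T_{n+1}:=T_n\circ S$ and the same tolerance-summing estimates) is reproduced in the paper's proof of Theorem~\ref{thm:polynomial_WODP_Shvidkoy}, where it is attributed to \cite[Lemma~5.5]{MRZ22}. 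So your route is not only correct but essentially the canonical one.
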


In the same vein as Theorems \ref{thm:polynomial_Shvidkoy_result} and \ref{theorem:Shvidkoy-for-Daugavetcenters}, we present a stronger version of Lemma \ref{lemma:WODP_Shvidkoy}, formulated once again for the weak polynomial topology. 

\begin{thm}\label{thm:polynomial_WODP_Shvidkoy}
    Let $X$ be a Banach space with the WODP. Then, given $x_1,\dots,x_n\in B_X$, $\eps>0$, and $x'\in B_X$,  the set $\OF(x_1,\dots,x_n;x',\eps)$ is dense in $B_X$ for weak polynomial topology.
\end{thm}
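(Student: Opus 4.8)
The plan is to mirror the proof of Theorem~\ref{thm:polynomial_Shvidkoy_result}, replacing Shvidkoy's lemma throughout by its WODP counterpart, Lemma~\ref{lemma:WODP_Shvidkoy}. Fix $x_1,\dots,x_n\in B_X$, $x'\in B_X$, $\eps>0$, and a target point $z\in B_X$. Relying once more on the Davie--Gamelin observations from \cite[Section~3]{DG89}, it suffices to show that for every finite family $\F$ of continuous symmetric multilinear forms on $X$ and every $N\in\N$ large enough, one can produce $y_1,\dots,y_N\in B_X$ whose arithmetic mean $x_\F:=\frac1N\sum_{i=1}^N y_i$ both $\eps$-approximates $z$ under every homogeneous polynomial coming from $\F$ (the purely computational part, identical to Theorem~\ref{thm:polynomial_Shvidkoy_result}) and satisfies $x_\F\in\OF(x_1,\dots,x_n;x',\eps)$. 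As there, the points $y_k$ are selected inductively: at each step Lemma~\ref{lemma:WODP_Shvidkoy} furnishes a net in the relevant $\OF$-set converging weakly to $z$, and, since the maps $w\mapsto F(y_{i_1},\dots,y_{i_{k-1}},w,z,\dots,z)$ are linear and weakly continuous, one extracts a member realizing the nested multilinear estimates.

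The genuinely new ingredient, playing the role of the $\eps$-quasi-codirected argument, is to force $x_\F$ into $\OF(x_1,\dots,x_n;x',\eps)$. The key observation is that membership in $\OF$ is witnessed by a \emph{linear} contraction: if a single $T\in\mathcal L(X,X)$ with $\norm T\le1$ satisfies $\norm{T(x_j)-x_j}<\eps$ for all $j$ and $\norm{T(y_i)-x'}<\eps$ for all $i$, then by linearity $\norm{T(x_\F)-x'}=\norm{\frac1N\sum_i(T(y_i)-x')}<\eps$, so that $x_\F\in\OF(x_1,\dots,x_n;x',\eps)$. Thus the induction should not merely place each $y_k$ in some $\OF$-set, but should produce, alongside the points, a \emph{single} contraction sending all of $y_1,\dots,y_N$ near $x'$ while approximately fixing $x_1,\dots,x_n$. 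I would build this contraction by telescoping: maintaining at step $k$ a contraction $T^{(k)}$ with $\norm{T^{(k)}(x_j)-x_j}$ and $\norm{T^{(k)}(y_i)-x'}$ (for $i\le k$) controlled by $k\delta$ for a small fixed $\delta$, and at step $k+1$ composing $T^{(k)}$ with a new contraction obtained from Lemma~\ref{lemma:WODP_Shvidkoy} applied to the enlarged datum $\OF(x_1,\dots,x_n,x';x',\delta)$, so that the new map approximately fixes both $x_1,\dots,x_n$ and $x'$ (thereby preserving the previous images near $x'$) while pushing the new point to near $x'$. Choosing $\delta=\eps/2N$ keeps the accumulated error below $\eps$ after $N$ steps, as contractions do not amplify errors.

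The main obstacle is precisely the coordination between this operator construction and the Davie--Gamelin weak approximation. In the linear Daugavet property the analogous common witness, a norming functional $f\in S_{X^*}$ with $\re f(x)\approx1$ and $\re f(y_i)\approx1$, is handed to us for free by the Hahn--Banach theorem once $\norm{x+\sum_i y_i}$ is nearly maximal, and in the Daugavet-center case the single fixed operator $G$ passes through the mean by its own linearity; here, by contrast, the witnessing contraction is not prescribed in advance and varies from step to step, so securing one contraction valid for the whole mean is delicate. The precise tension is that, when composing, each new contraction must act on the $T^{(k)}$-image of the next point rather than on the point itself, whereas the weak approximation only controls that point near $z$; reconciling these---either by running Lemma~\ref{lemma:WODP_Shvidkoy} along the image configuration through the weak-to-weak continuity of $T^{(k)}$, or by first isolating a multi-point strengthening of Lemma~\ref{lemma:WODP_Shvidkoy} that directly yields one contraction sending a prescribed finite set near $x'$---is where the real work lies. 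Once the common contraction is in hand, the desired net of means, converging to $z$ in the relative weak polynomial topology and lying in $\OF(x_1,\dots,x_n;x',\eps)$, is produced exactly as in Theorem~\ref{thm:polynomial_Shvidkoy_result}.
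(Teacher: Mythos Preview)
Your overall plan matches the paper's exactly: build $y_1,\dots,y_N$ \`a la Davie--Gamelin, produce a \emph{single} contraction $T$ sending every $y_i$ near $x'$ while nearly fixing $x_1,\dots,x_n$, and conclude that the mean $x_\F$ lies in $\OF(x_1,\dots,x_n;x',\eps)$ by linearity. You also correctly spot that the contraction must be assembled by composition along the induction, and that $x'$ should be added to the ``nearly fixed'' list so that composites do not drift.

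Where your proposal stalls is precisely the ``coordination obstacle'' you flag---and this obstacle is an artifact of two avoidable choices. First, you compose in the order $S\circ T^{(k)}$ (new map last); this forces $S$ to act on $T^{(k)}(y_{k+1})$, which you rightly note you do not control. Second, your enlarged datum $\OF(x_1,\dots,x_n,x';x',\delta)$ omits the already-constructed points $y_1,\dots,y_k$. The paper resolves both at once: at step $k+1$ it applies Lemma~\ref{lemma:WODP_Shvidkoy} to the set $\OF(x',x_1,\dots,x_n,y_1,\dots,y_k;x',\xi)$ (so the new contraction $S$ also nearly fixes each previously chosen $y_i$), and then sets $T_{k+1}:=T_k\circ S$, i.e.\ the \emph{new} map is applied \emph{first}. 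With this order the bookkeeping is painless:
\[
\norm{T_{k+1}(y_i)-x'}\le\norm{T_k(S(y_i)-y_i)}+\norm{T_k(y_i)-x'}<\xi+k\xi \quad (i\le k),
\]
\[
\norm{T_{k+1}(y_{k+1})-x'}\le\norm{T_k(S(y_{k+1})-x')}+\norm{T_k(x')-x'}<\xi+k\xi,
\]
and similarly for $x_l$ and $x'$. No weak-to-weak continuity argument and no multi-point strengthening of Lemma~\ref{lemma:WODP_Shvidkoy} is needed; the lemma is applied exactly as stated, just with a longer (but still finite) list of points to nearly fix. Once you make these two changes, the rest of your write-up goes through verbatim.
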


The proof follows similar arguments to those used in Theorem~\ref{thm:polynomial_Shvidkoy_result}. As in the proof of Theorem~\ref{theorem:Shvidkoy-for-Daugavetcenters}, the main difference lies in the need to find a suitable alternative to the $\eps$-quasi-codirected point technique. In order to achieve this, we essentially follow the ideas presented in \cite[Lemma~5.5]{MRZ22}.

\begin{proof}[Proof of Theorem~\ref{thm:polynomial_WODP_Shvidkoy}]
   Fix $x_1,\dots,x_\ell\in B_X$, $\eps>0$, and $x'\in B_X$. Then take $y\in B_X$. Again, it is sufficient to prove the following: given $\eps>0$, a finite family of $\F$ of continuous symmetric multilinear forms on $X$, and $N\in\N$ large enough so that every $F\in \F$ is $m$-linear for some $m\leq N$, we can find $y_1,\dots,y_N\in$ $S_X$ and $T\in \mathcal{L}(X,X)$ with $\norm{T}\leq 1$ such that \begin{equation}
   \label{eqt:operator_l_1_sequence}
       \norm{T(x_l)-x_l} <\eps \ \forall l\in\{1,\dots,\ell\},  \qquad \norm{T(y_i)-x'}<\eps\ \forall i\in\{1,\dots,N\};
   \end{equation}
   and \begin{equation*}
       \bigl| F(y_{i_1},\dots,y_{i_m})-F(y,\dots,y)\bigr|<\eps
   \end{equation*} for every $F\in\F$ $m$-linear and for every $i_1<\dots<i_m\in\{1,\dots,N\}$. Indeed, on one hand, the latter condition would give, as in the proof of Theorem~\ref{thm:polynomial_Shvidkoy_result}, that taking $N$ large enough and letting $y_\F:=\frac{1}{N}\sum_{i=1}^Ny_i$, we would get 
   \begin{equation*}
       \bigl|F(y_\F,\dots,y_\F)-F(y,\dots,y)\bigr|<2\eps
   \end{equation*} for every $F\in\F$. On the other hand, from \eqref{eqt:operator_l_1_sequence} and a straightforward application of the triangle inequality, we would get that \begin{equation*}
       \norm{T(y_\F)-x'}\leq\frac{1}{N}\sum_{i=1}^N\norm{T(y_i)-x'}<\eps.
   \end{equation*} Therefore, $y_\F\in \OF(x_1,\dots,x_\ell;x',\eps)$, and we would immediately get the net converging to $y$ we wanted.

Once again, we will construct the $y_n$'s inductively. Fix $\eps>0$, and let $\xi:=\frac{\eps}{N}$. We will construct $y_1,\dots,y_N$ in $B_X$ and $T_1,\dots, T_N\in \mathcal{L}(X,X)$ such that for every $n\in\{1,\dots,N\}$, we have $\norm{T_n}\leq 1$;
   \begin{equation*}
      \norm{T_n(x_l)-x_l}<n\xi \ \forall l\in\{1,\dots,\ell\} \ \ \text{ and } \ \     \norm{T_n(x')-x'}<n\xi; 
      \end{equation*} 
      \begin{equation*}\norm{T_n(y_i)-x'}<n\xi \ \forall i\in\{1,\dots,n\};
   \end{equation*} 
   and 
   \begin{equation*}
       \bigl|F(y_{i_1},\dots,y_{i_{k-1}},y_{i_k},y,\dots,y)- F(y_{i_1},\dots,y_{i_{k-1}},y,y,\dots,y)\bigr|<\frac{\eps}{N}
   \end{equation*} 
   for every $m$-linear symmetric form $F\in\F$, $k\leq m$, and $i_1<\dots<i_k\leq n$.

  The case $n=1$ immediately follows from Lemma~\ref{lemma:WODP_Shvidkoy} by combining the weak continuity of the mapping $z\mapsto F(z,y,\dots,y)$ and the weak density of the set $\OF(x',x_1,\dots,x_\ell;x',\xi)$, where we also ask the point $x'$ itself to be almost fixed by our operator. 

   Now, assume that $y_1,\dots,y_n$ and $T_1,\dots,T_n$ satisfy the above conditions for some $n\in\{1,\dots,N-1\}$. Applying Lemma~\ref{lemma:WODP_Shvidkoy} with an additional constraint on the values of the operator at the points $y_1,\dots,y_n$, and using the weak continuity of the (finite number of) mappings $z\mapsto F(y_{i_1},\dots,y_{i_k},z,y,\dots,y)$, we obtain a point $y_{n+1}$ in $\OF(x',x_1,\dots,x_\ell,y_1,\dots,y_n;x',\xi)$ such that given any $m$-linear symmetric form $F\in\F$, $k\leq m-1$, and $i_1<\dots<i_k\leq n$, we have 
   \begin{equation*}
       \bigl| F(y_{i_1},\dots,y_{i_k},y_{n+1},y,\dots,y)- F(y_{i_1},\dots,y_{i_k},y,y,\dots,y)\bigr|<\frac{\eps}{N}.
   \end{equation*} 
   Let $S\in \mathcal{L}(X,X)$ with $\|S\|=1$ witnessing the fact that $y_{n+1}$ belongs to 
   \begin{equation*} \OF(x',x_1,\dots,x_\ell,y_1,\dots,y_n;x',\xi).
   \end{equation*} We claim that the operator $T_{n+1}:=T_n\circ S$ does what we want. Indeed, first, for every $l\in\{1\dots,\ell\}$, we have \begin{align*}
   \norm{T_{n+1}(x_l)-x_l}&\leq \norm{T_n(S(x_l)-x_l)}+\norm{T_n(x_l)-x_l} \\ &< \norm{S(x_l)-x_l}+n\xi< (n+1)\xi.
   \end{align*} 
   Similarly, 
   \begin{align*}
   \norm{T_{n+1}(x')-x'} &\leq \norm{T_n(S(x')-x')}+\norm{T_n(x')-x'} < (n+1)\xi.
   \end{align*} 
   Second, for every $i\in\{1,\dots,n\}$, we have 
   \begin{align*}
   \norm{T_{n+1}(y_i)-x'}&\leq \norm{T_n(S(y_i)-y_i)}+\norm{T_n(y_i)-x'} \\ &< \norm{S(y_i)-y_i}+n\xi< (n+1)\xi.
   \end{align*} 
   Last,  
   \begin{equation*}
   \norm{T_{n+1}(y_{n+1})-x'}\leq \norm{T_n(S(y_{n+1})-x')}+\norm{T_n(x')-x'}< (n+1)\xi.
   \end{equation*} So we can now conclude as in the proof of Theorem~\ref{thm:polynomial_Shvidkoy_result}. 
\end{proof}

We conclude this work with an application of Theorem~\ref{thm:polynomial_WODP_Shvidkoy}. Given a Banach space $X$ and $N \in \mathbb{N}$, the ($N$-fold) {\it projective symmetric tensor product} of $X$, denoted by $\widehat{\bigotimes}_{\pi,s,N}X$, is the completion of the space $\bigotimes^{s,N} X$ endowed with the norm 
\begin{equation*}
    \|u\|_{\pi, s, N} := \inf \left\{ \sum_{i=1}^n |\lambda_i| \|x_i\|^N: \ u = \sum_{i=1}^n \lambda_i x_i^N, \ n \in \mathbb{N}, \ x_i \in X \right\}, 
\end{equation*}
where the infimum is taken over all such representations of $u$ and $x^N := x \otimes \overset{N}{\ldots} \otimes x$ for every $x \in X$. It turns out that the topological dual $(\widehat{\bigotimes}_{\pi,s,N}X)^*$ can be identified (there is an isometric isomorphism) with $\mathcal{P}(^N X)$. We refer the interested reader to \cite{BR01, CD00, CG11, Floret97} for further background on the topic.

As a consequence of Theorem \ref{thm:polynomial_WODP_Shvidkoy}, we get the following consequence. 

\begin{cor} \label{cor:WODP-polynomialWODP} Let $X$ be a Banach space and $N \in \mathbb{N}$. Suppose that $X$ has the WODP. Then,\parskip=0pt
\begin{enumerate}
\item[\textup{(a)}] $X$ satisfies the polynomial WODP.
\item[\textup{(b)}] $\widehat{\bigotimes}_{\pi,s,N}X$ has the WODP.
\end{enumerate}
In particular, the following Banach spaces $X$ are such that $\widehat{\otimes}_{\pi,s,N}X$ has the DPr.  
     \begin{enumerate}
\item[\textup{(b1)}] $L$-embedded spaces $X$ satisfying the MAP and the DPr, with $\dens(X) \leq \omega_1$.
\item[\textup{(b2)}]  $X=W_1 \widehat{\otimes}_{\pi} W_2$, where $W_1$ and $W_2$ are $L_1$-preduals with the DPr, or $L_1(\mu, Y)$-spaces for $\mu$ atomless and $Y$ arbitrary, or spaces as in (b1). 
    \end{enumerate}     
\end{cor}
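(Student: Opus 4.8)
The plan is to observe that the corollary reduces to one new implication, item~(a), from which items~(b), (b1), and (b2) follow by combining results already in the literature. Item~(a) will be deduced from Theorem~\ref{thm:polynomial_WODP_Shvidkoy} in exactly the same way Corollary~\ref{DPr-polynomialDPr} was deduced from Theorem~\ref{thm:polynomial_Shvidkoy_result}: the weak polynomial density of the relevant $\OF$-set, together with the continuity of every scalar-valued polynomial for the weak polynomial topology, allows one to push a near-norming point of $p$ into that $\OF$-set while keeping it almost norming.

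To prove~(a), I would fix $x_1,\dots,x_n\in B_X$, $\eps,\delta>0$, $x'\in B_X$, and $p\in\Pol(X)$ with $\norm{p}=1$. Since the norm of $p$ is the supremum of $\abs{p}$ over $B_X$, I would first pick $y_0\in B_X$ with $\abs{p(y_0)}>1-\delta$ and a modulus-one scalar $\omega\in\K$ such that $\real\omega p(y_0)=\abs{p(y_0)}>1-\delta$. By Theorem~\ref{thm:polynomial_WODP_Shvidkoy}, the set $\OF(x_1,\dots,x_n;x',\eps)$ is dense in $B_X$ for the relative weak polynomial topology, so there is a net $(y_\alpha)$ in this set converging to $y_0$ for that topology. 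As $p$ is continuous for the weak polynomial topology, $p(y_\alpha)\to p(y_0)$, hence $\real\omega p(y_\alpha)>1-\delta$ for $\alpha$ large enough; taking $y:=y_\alpha$ produces a point of $\OF(x_1,\dots,x_n;x',\eps)$ which, together with $\omega$, witnesses the polynomial WODP.

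For~(b), I would invoke \cite[Theorem~5.12]{MRZ22}, which gives that $\widehat{\bigotimes}_{\pi,s,N}X$ has the WODP whenever $X$ has the polynomial WODP; combined with~(a), this is immediate. For the explicit families~(b1) and~(b2), the plan is to check that each listed space $X$ already has the WODP---by \cite[Theorem~3.1]{DJRZ22} for the $L$-embedded spaces in~(b1), and by \cite[Theorem~5.4]{MRZ22} (see also \cite{RZTV21}) for the projective tensor products in~(b2)---and then apply~(b) to obtain the WODP of $\widehat{\bigotimes}_{\pi,s,N}X$. Since the WODP implies the DPr (applying the WODP with $x_1=x'=x\in S_X$ against a slice $S$ yields $y\in S$ and a norm-one $T$ with $\norm{x+y}\geq\norm{T(x+y)}>2-2\eps$), the claimed DPr conclusions follow.

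I do not expect a genuine obstacle here: the substantial content has already been established in Theorem~\ref{thm:polynomial_WODP_Shvidkoy}, and what remains is a transfer argument plus the correct citations. The only points I would verify carefully are that $\norm{p}=\sup_{z\in B_X}\abs{p(z)}$ so that a near-norming $y_0$ indeed exists, and that the WODP does imply the DPr, which justifies the final statements about the symmetric tensor products.
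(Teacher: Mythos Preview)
Your proposal is correct and follows essentially the same approach as the paper: the paper also derives~(a) directly from Theorem~\ref{thm:polynomial_WODP_Shvidkoy}, then obtains~(b) from \cite[Theorem~5.12]{MRZ22}, and lists~(b1) and~(b2) via the same citations \cite[Theorem~3.1]{DJRZ22}, \cite[Theorem~5.4]{MRZ22}, and \cite{RZTV21}. Your write-up in fact makes the derivation of~(a) more explicit than the paper does, and your short verification that the WODP implies the DPr is a welcome addition that the paper leaves implicit.
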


\begin{proof} We first prove (a). Fix $x_1, \ldots, x_n \in B_X$, $\eps, \delta > 0$, $x' \in B_X$ and $p \in \mathcal{P}(X)$ with $\|p\| = 1$. Choose $y \in B_X$ and a modulus one scalar $\omega \in \K$ such that $$\re \omega p(y) > 1 - \delta/2.$$ By Theorem \ref{thm:polynomial_WODP_Shvidkoy}, there exists $z \in \OF(x_1, \ldots, x_n; x', \eps)$ such that  $|p(z) - p(y)| < \delta/2$. Consequently, $\re \omega p(z) \geq \re \omega p(y) - |p(z) - p(y)| > 1 - \delta$, which proves (a). That (a) implies (b) was proved in \cite[Theorem 5.12]{DJRZ22}. It was proved in \cite[Theorem 3.1]{DJRZ22} that the hypothesis of (b1) implies that $X$ has the WODP, so (b) gives the result. By \cite[Propositions 5.9 and 5.11]{MRZ22} and \cite[Theorem~3.1]{DJRZ22}, each $W_1$ and $W_2$ in (b2) has the WODP. Therefore, \cite[Theorem 5.4]{MRZ22} implies that $X = W_1 \widehat{\otimes}_{\pi} W_2$ has the WODP as well. Now, (b) gives the result.
\end{proof}

\section*{Acknowledgments}
Part of this work was carried out during the visit of the third author to the University of Granada in June 2025, following his participation in the BIRS-IMAG conference {\it Functional and Metric Analysis and their Interactions}, which was also attended by the other two authors. He gratefully acknowledges the warm hospitality extended by his colleagues and wishes to thank all those who contributed to making his visit possible. The authors would like to thank Mingu Jung for highlighting some details in the main proof that contributed to the final improved version of the paper. We also thank the anonymous referee for the detailed revision of the manuscript and for multiple stylistic suggestions which have improve the final version of paper.  

\section*{Funding}
Sheldon Dantas has been supported by the grants PID2021-122126NB-C31 and PID2021-122126NB-C33 funded by MICIU/AEI/10.13039/ 501100011033 and by ERDF/EU.
Miguel Mart\'{\i}n has been supported by the grant PID2021-122126NB-C31 funded by MICIU/AEI/10.13039/501100011033 and ERDF/EU, grant ``Mar\'{\i}a de Maeztu'' Excellence Unit IMAG funded by MICIU/AEI/10.13039/501100011033 with reference CEX2020-001105-M, and by grant FQM-0185 funded by Junta de Andaluc\'{\i}a. Yo\"el~Perreau has been supported by the Estonian Research Council grant PRG2545.

\end{document}